\newtheorem{Thm}{Theorem}[section]
\newtheorem{Lem}[Thm]{Lemma}
\newtheorem{Prop}[Thm]{Proposition}
\newtheorem{Cor}[Thm]{Corollary}
\theoremstyle{definition}
\newtheorem{Def}[Thm]{Definition}
\newtheorem{Rem}[Thm]{Remark}
\newtheorem{Exa}[Thm]{Example}
\begin{document}

\title[PM of r.v. and characterization via frac. calc.]{Power means of random variables and characterizations of distributions via fractional calculus}

\author[K. Okamura]{Kazuki Okamura}
\address{Department of Mathematics, Faculty of Science, Shizuoka University}
\email{okamura.kazuki@shizuoka.ac.jp}

\author[Y. Otobe]{Yoshiki Otobe}
\address{Department of Mathematics, Faculty of Science, Shinshu University}
\email{otobe@math.shinshu-u.ac.jp}

\date{\today}
\maketitle

\begin{abstract}
We investigate fractional moments and expectations of power means of complex-valued random variables by using fractional calculus.
We deal with both negative and positive orders of the fractional derivatives.
The one-dimensional distributions are characterized in terms of the fractional moments without any moment assumptions.
We explicitly compute the expectations of the power means for both the univariate Cauchy distribution and the Poincar\'e distribution on the upper-half plane.
We show that for these distributions the expectations are invariant with respect to the sample size and the value of the power. 
\end{abstract}

\section{Introduction}

The strong law of large numbers is fundamental to probability.
This law states that the arithmetic mean of independent and identically distributed (i.i.d.) random variables converges almost surely to a constant.
This constant is equal to the expectation of the respective random variable.
For stochastic processes,
once we establish the law of large numbers, then, we move on to more sophisticated limit theorems such as the central limit theorem.
For non-integrable i.i.d. random variables such as Cauchy distributions,
the law of large numbers fails for the arithmetic mean.
In order to establish universality for non-integrable i.i.d. random variables,
we need to consider an alternative statistic other than the arithmetic mean. 

The framework of {\it quasi-arithmetic means} provides an alternative approach to universality.
This notion was introduced independently by Kolmogorov \cite{Kolmogorov1930}, Nagumo \cite{Nagumo1930}, and de Finetti \cite{Finetti1931}.
They proposed axioms of means
and showed that if the axioms of means hold for an $n$-ary operation on an interval of real numbers,
then that operation must take the form of a quasi-arithmetic mean.
It is defined by
$M^f_n = M^f_n (x_1, \dots, x_n) := f^{-1}\left( \frac{1}{n} \sum_{j=1}^{n} f(x_j) \right)$
for a generator $f$.
$M^f_n$ is the arithmetic mean if $f(x) = x$.
This framework includes geometric, harmonic, and power means.
The random variable $M^f_n (X_1, \dots, X_n)$, where $X_1, \dots, X_n$ are i.i.d. random variables, can be integrable even if $X_1$ is non-integrable, and it plays a similar role to the arithmetic mean of i.i.d. random variables. 
Limit theorems such as the strong law of large numbers and the central limit theorem hold.
The strong law of large numbers states that $\lim_{n \to \infty} M^f_n = f^{-1}(E[f(X_1)])$ almost surely and it follows directly from the usual strong law of large numbers if $f(X_1) \in L^1$.
The central limit theorem is more complicated, but it follows from the delta method (Carvalho \cite{Carvalho2016}).
Recently, limit theorems of more general means of i.i.d. random variables have been considered by Barczy and Burai \cite{Barczy2022} and Barczy and P\'ales \cite{Barczy2023}.
By considering {\it complex-valued} quasi-arithmetic means of i.i.d. random variables,
we can deal with heavy-tailed random variables {\it supported on $\mathbb{R}$}.
Akaoka and the authors \cite{Akaoka2022} considered integrability and asymptotic variances of quasi-arithmetic means and gave applications to quasi-arithmetic means of Cauchy distributions.

The expectations of the arithmetic mean of i.i.d. integrable random variables are equal to the expectation of the respective random variable.
However, the expectation of $M^f_n$, $E\left[M^f_n\right]$, is difficult to compute explicitly,
because $M^f_n$ is defined by using $f^{-1}$, which can be non-linear.
In this paper, we consider $E\left[M^f_n \right]$
for the case of complex-valued {\it power means}, more specifically, the generator is given by $f(z) = z^p$, $p \in [-1,1], p \ne 0$.
Each power mean is a homogeneous quasi-arithmetic mean.
The class of power means for $p \in (-1,0)$ interpolates between the harmonic mean and the geometric mean, and, the class for $p \in (0,1)$ interpolates between the geometric mean and the arithmetic mean.

We deal with the fractional moment $E\left[Z^{\lambda}\right]$ of a complex-valued random variable $Z$ with a complex power $\lambda$,
and furthermore the expectation of the power mean $E\left[\left( \frac{1}{n} \sum_{j=1}^{n} Z_j^p \right)^{1/p}\right]$ of i.i.d. complex-valued random variables  $(Z_j)_j$ for $p \in [-1,1]$.
We explain the reason why we use {\it fractional calculus} to compute them. 
Let $\varphi_Z (t) := E[\exp(it Z)]$.
If $E\left[\left|Z\right|^n\right] < +\infty$ for some $n \in \mathbb{N}$, then,
\[ \left.\frac{\partial^n}{\partial t^n} \varphi_Z (t)\right|_{t=0} = i^n E\left[Z^n\right]. \]
We replace the natural number $n$, the order of the derivative, with a {\it fractional} number $\lambda$ and then we formally obtain that 
\[ \left.\frac{\partial^{\lambda}}{\partial t^{\lambda}} \varphi_Z (t)\right|_{t=0} = i^{\lambda} E\left[Z^{\lambda}\right]. \]

Part of our paper is devoted to justifying this heuristic argument.
There are several different frameworks for fractional derivatives (Oldham and Spanier \cite{Oldham1974}).
Here we adopt the Riemann-Liouville integral for $\textup{Re}(\lambda) < 0$ and Marchaud's fractional derivative (\cite{Marchaud1927}) for $\textup{Re}(\lambda) > 0$.
Then we proceed to consider the case that $Z = \frac{1}{n} \sum_{j=1}^{n} Z_j^p$ and $\lambda = p \in [-1,1]$.
The relationship between fractional calculus and the fractional {\it absolute} moment $E\left[|X|^p\right]$ of a real-valued random variable $X$ and $p \in \mathbb{R}$ has been considered in many papers.
For more details, see Matsui and Pawlas \cite{Matsui2016} and references therein.
Here we consider connections between fractional calculus and fractional (non-absolute) moments of complex-valued random variables.

By using our results for the fractional moment,
we show that certain subfamilies of the expectations $\left\{E\left[(X+\alpha)^{\lambda}\right] : \alpha \in \overline{\mathbb{H}}, \lambda \in \mathbb{C} \right\}$ characterize the distribution of a real-valued random variable $X$, where  $\mathbb{H}$ is the upper-half plane and $\overline{\mathbb{H}}$ is its closure. 
Our results are partly similar to those of Lin \cite{Lin1992}, but with notable differences.
We consider $X + \alpha$ instead of $X$, which makes the considerations for $\textup{Re}(\lambda) < 0$ much clearer, since with this modification we do not need to impose any integrability conditions on $X$.
Furthermore, the notion of determining sets of holomorphic functions is involved.

Our framework is applicable to {\it Poincar\'e distributions}, which are a parametric family of distributions supported on $\mathbb{H}$ recently introduced by Tojo and Yoshino \cite{tojo-2020, tojo2021harmonic}. 
If $Z$ follows a Poincar\'e distribution, then, we can explicitly compute $\varphi_Z$. 
By using our results for the fractional moment,
we can compute $E\left[Z_1^{\lambda}\right]$ and $E\left[\left( \frac{1}{n} \sum_{j=1}^{n} Z_j^p \right)^{1/p}\right]$ if $Z_1$ follows a Poincar\'e distribution.
Similarly, we can deal with the case that $Z_j = X_j + \alpha$ where $\alpha \in \mathbb{H}$ and $(X_j)_j$ are i.i.d. real-valued random variables following a Cauchy distribution or a $t$-distribution with $3$ degrees of freedom.
For the Poincar\'e and Cauchy distributions,
$E\left[\left( \frac{1}{n} \sum_{j=1}^{n} Z_j^p \right)^{1/p}\right]$ does not depend on the sample size $n$ or the parameter $p$.
However, this fails for the $t$-distribution with $3$ degrees of freedom.

The rest of the paper is organized as follows.
In Sections \ref{sec:neg} and \ref{sec:pos},
we give integral expressions for the fractional moment $E\left[Z^{\lambda}\right]$ and the expectation of the  power mean $E\left[\left( \frac{1}{n} \sum_{j=1}^{n} Z_j^p \right)^{1/p}\right]$ by using $\varphi_Z$.
The cases that $\textup{Re}(\lambda) < 0$ and $\textup{Re}(\lambda) > 0$ are considered in Sections \ref{sec:neg} and \ref{sec:pos} respectively.
In Section \ref{sec:char}, by using the results in Sections \ref{sec:neg} and \ref{sec:pos},
we show that certain subfamilies of $\{E[(X+\alpha)^{\lambda}]\}_{\alpha, \lambda}$ characterize the distribution of a real-valued random variable $X$.
In Section \ref{sec:appl}, we compute the expectations of the complex-valued power means of Cauchy distributions, $t$-distributions with 3 degrees of freedom, and Poincar\'e distributions.
In the Appendix, we compare the fractional absolute moment with the absolute value of the fractional moment.

\subsection{Notation}

We denote the imaginary unit by $i$.
For $z \in \mathbb{C}, z \ne 0$, we let
$\log z := \log |z| + i\theta$, where $z = r\exp(i\theta)$, $-\pi < \theta \le \pi$ and $r > 0$.
For $\lambda \in \mathbb{C}$,
let
$$z^{\lambda} := \begin{cases}
	\exp(\lambda \log z),& z \ne 0 \\
	0,& z = 0 \end{cases}.$$
We denote the real and imaginary parts of a complex number $z$ by $\textup{Re}(z)$ and $\textup{Im}(z)$ respectively.

Let $\overline{A}$ be the closure of a subset $A$ of $\mathbb{C}$.
Let
$\mathbb H := \{x+yi : x \in \mathbb{R}, y > 0\}$, and
$-\mathbb{H} := \{x+yi : x \in \mathbb{R}, y < 0\}$.
Let
$U := i \mathbb{H} = \left\{x+yi \mid x < 0, y \in \mathbb{R} \right\}$ and $V := -U = -i \mathbb{H} = \left\{  x+yi \mid x > 0, y \in \mathbb{R} \right\}$. 

Let the Gamma function be
$$\Gamma(\lambda) := \int_0^{\infty} t^{\lambda-1} \exp(-t) dt, \ \ \lambda \in V.$$  
For a complex-valued random variable $Z$, we denote the distribution of $Z$ by $P^Z$.
For $r > 0$, we say that $Z \in L^r$ if $|Z| \in L^r$.
For every $\alpha \in \mathbb{C}$,
$Z \in L^r$ if and only if $Z + \alpha \in L^r$.

\section{Fractional derivative of negative order}\label{sec:neg}

We use the Riemann-Liouville integral as in Wolfe  \cite{Wolfe1975} and Cressie and Borkent  \cite[Definition 1]{Cressie1986}.

\begin{Def}
Let $\lambda \in U$.
For a Borel measurable function $f : (-\infty, 0] \to \mathbb{C}$,
\[ \left.\frac{\partial^{\lambda}}{\partial t^{\lambda}} f(t)\right|_{t=0} := \frac{1}{\Gamma(-\lambda)} \int_0^{\infty} u^{-\lambda - 1} f(-u) du \]
if the integral on the right-hand side in the above display exists.
\end{Def}

It suffices to define the left derivative at $t=0$ only. 
As indicated in \cite{Wolfe1975},
it is natural to consider the fractional derivative of complex order.
Here the negative order means that the real part of the complex order is negative.

\begin{Lem}\label{lem:negative-basic-1}
Let $\lambda \in U$.
Then we have the following assertions:\\
(i) Let $Z$ be an $\mathbb{H}$-valued random variable.
Assume that $E[\textup{Im}(Z)^{\textup{Re}(\lambda)}] < +\infty$.
Then, $Z^{\lambda} \in L^1$ and
\begin{equation}\label{eq:negative-upperhalf-1}
\left.\frac{\partial^{\lambda}}{\partial t^{\lambda}} E[\exp(-itZ)]\right|_{t=0} = i^{-\lambda} E\left[Z^{\lambda}\right].
\end{equation} \\
(ii) Let $Z$ be a $(-\mathbb{H})$-valued random variable.
Assume that $E[(-\textup{Im}(Z))^{\textup{Re}(\lambda)}] < +\infty$.
Then, $Z^{\lambda} \in L^1$ and
\[ \left.\frac{\partial^{\lambda}}{\partial t^{\lambda}} E[\exp(itZ)]\right|_{t=0} = (-i)^{-\lambda} E\left[Z^{\lambda}\right]. \]
\end{Lem}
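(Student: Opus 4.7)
My plan is to write out the fractional derivative using its definition, justify a Fubini exchange, evaluate the resulting deterministic integral by contour rotation, and finally verify the branch identity that converts $(-iZ)^{\lambda}$ into $i^{-\lambda} Z^{\lambda}$. Throughout I will treat (i); part (ii) will follow from (i) by replacing $Z$ with $-Z$ and checking a branch identity.

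Setting $f(t) = E[\exp(-itZ)]$, so $f(-u) = E[\exp(iuZ)]$, the definition gives
\[
\left.\frac{\partial^{\lambda}}{\partial t^{\lambda}} f(t)\right|_{t=0} = \frac{1}{\Gamma(-\lambda)} \int_0^{\infty} u^{-\lambda - 1} E[\exp(iuZ)] \, du.
\]
For $Z \in \mathbb{H}$ and $u > 0$ we have $|u^{-\lambda-1} \exp(iuZ)| = u^{-\textup{Re}(\lambda)-1} \exp(-u \, \textup{Im}(Z))$. Substituting $v = u \, \textup{Im}(Z)$ in the inner integral gives
\[
\int_0^\infty u^{-\textup{Re}(\lambda)-1} \exp(-u\,\textup{Im}(Z))\, du = \Gamma(-\textup{Re}(\lambda)) \, \textup{Im}(Z)^{\textup{Re}(\lambda)},
\]
which is integrable against $P^Z$ by the hypothesis $E[\textup{Im}(Z)^{\textup{Re}(\lambda)}] < \infty$. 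This justifies exchanging the order of integration.

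The main computation is then the deterministic integral $I(Z) := \int_0^\infty u^{-\lambda-1} \exp(iuZ) \, du$ for fixed $Z \in \mathbb{H}$. I would change variables by $s = -iuZ$. Since $Z \in \mathbb{H}$ we have $-iZ \in V$, so as $u$ ranges over $(0,\infty)$ the variable $s$ traces a ray $\{r e^{i\phi} : r > 0\}$ with $\phi = \arg(-iZ) \in (-\pi/2, \pi/2)$. On this ray, $\exp(-s)$ decays, so by Cauchy's theorem applied to the analytic function $s^{-\lambda-1} \exp(-s)$ on the right half-plane (with standard quarter-arc estimates at $0$ and $\infty$), the contour can be rotated to the positive real axis, yielding
\[
\int_0^\infty u^{-\lambda-1} \exp(iuZ)\, du = (-iZ)^{\lambda}\int_0^\infty s^{-\lambda-1}\exp(-s)\, ds = \Gamma(-\lambda) (-iZ)^{\lambda},
\]
where the factor $(-iZ)^{\lambda}$ is unambiguous because $-iZ$ lies strictly in the right half-plane, so $(-iuZ)^{-\lambda-1} = u^{-\lambda-1}(-iZ)^{-\lambda-1}$ with the principal branch. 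Finally, since $Z \in \mathbb{H}$ has $\arg Z \in (0,\pi)$ and $\arg(-iZ) = \arg Z - \pi/2 \in (-\pi/2, \pi/2)$, the principal logarithm satisfies $\log(-iZ) = \log Z - \log i$, so $(-iZ)^{\lambda} = i^{-\lambda} Z^{\lambda}$. Substituting back and dividing by $\Gamma(-\lambda)$ gives \eqref{eq:negative-upperhalf-1}. For $L^1$-integrability of $Z^{\lambda}$ I would bound $|Z^{\lambda}| \le e^{\pi|\textup{Im}(\lambda)|} |Z|^{\textup{Re}(\lambda)} \le e^{\pi|\textup{Im}(\lambda)|}\textup{Im}(Z)^{\textup{Re}(\lambda)}$, the last step using $|Z| \ge \textup{Im}(Z)$ together with $\textup{Re}(\lambda) < 0$.

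Part (ii) reduces to (i): if $Z$ is $(-\mathbb{H})$-valued then $-Z$ is $\mathbb{H}$-valued with $\textup{Im}(-Z) = -\textup{Im}(Z)$, $E[\exp(itZ)] = E[\exp(-it(-Z))]$, and a short calculation with $\log(-Z) = \log Z + i\pi$ on $-\mathbb{H}$ gives $(-Z)^{\lambda} = e^{i\pi\lambda} Z^{\lambda}$, whence $i^{-\lambda}(-Z)^{\lambda} = e^{i\pi\lambda/2} Z^{\lambda} = (-i)^{-\lambda} Z^{\lambda}$. The main technical obstacle is the contour rotation together with a careful tracking of the principal branch of the logarithm, since otherwise the factor $(-iZ)^{\lambda}$ could be off by $e^{2\pi i \lambda}$; the containment $\arg(-iZ) \in (-\pi/2,\pi/2)$ is precisely what prevents the branch cut from interfering.
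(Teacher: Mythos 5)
Your proposal is correct and follows essentially the same route as the paper: both justify the Fubini exchange by the same pointwise bound $\int_0^{\infty}|u^{-\lambda-1}\exp(iuz)|\,du = \Gamma(-\textup{Re}(\lambda))\,\textup{Im}(z)^{\textup{Re}(\lambda)}$, and both evaluate the inner integral $\int_0^{\infty}u^{-\lambda-1}\exp(iuz)\,du = i^{-\lambda}z^{\lambda}\Gamma(-\lambda)$ by a contour rotation (your substitution $s=-iuZ$ followed by a rotation to the positive real axis is just a $\pi/2$-rotated parametrization of the paper's rotation of the ray $C_z=\{tz:t\ge 0\}$ to $C_i$). The only slight variation is that you establish $Z^{\lambda}\in L^1$ directly from $|z^{\lambda}|=|z|^{\textup{Re}(\lambda)}\exp(-\textup{Im}(\lambda)\arg z)\le e^{\pi|\textup{Im}(\lambda)|}\textup{Im}(z)^{\textup{Re}(\lambda)}$, whereas the paper reads the same bound (with a different constant) off the integral identity; both are fine, and your reduction of (ii) to (i) via $Z\mapsto -Z$ and the branch identity is also sound.
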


We remark that $\textup{Re}(\lambda) < 0$.

\begin{proof}
We show (i).
The proof of (ii) is similar to it.
Let $z = re^{i\theta} \in \mathbb{H}$ such that $0 < \theta < \pi$.
Then,
\[ \int_0^{\infty} |u^{-\lambda - 1} \exp(iuz)| du = \int_0^{\infty} u^{-\textup{Re}(\lambda) - 1} \exp(-u\textup{Im}(z)) du = \textup{Im}(z)^{\textup{Re}(\lambda)} \Gamma\left(-\textup{Re}(\lambda)\right). \]
By this and the assumption,
\[ \int_{\mathbb{H}} \int_0^{\infty} | u^{-\lambda - 1} \exp(iuz)| du P^Z (dz) = E\left[\textup{Im}(Z)^{\textup{Re}(\lambda)}\right] \Gamma(-\textup{Re}(\lambda)) < +\infty. \]
Therefore we can use Fubini's theorem and we obtain that
\[ \left. \frac{\partial^{\lambda}}{\partial t^{\lambda}} E[\exp(-itZ)]\right|_{t=0} =  \frac{1}{\Gamma(-\lambda)} \int_0^{\infty} u^{-\lambda - 1} E[\exp(iuZ)] du  \]
\[=  \frac{1}{\Gamma(-\lambda)} E\left[ \int_0^{\infty} u^{-\lambda - 1} \exp(iuZ) du\right].\]
By the Cauchy integral theorem, 
\begin{align}\label{eq:basic-neg}
\int_0^{\infty} u^{-\lambda - 1} \exp(iuz) du
&= z^{\lambda} \int_{C_z} \zeta^{-\lambda - 1} \exp(i\zeta) d\zeta
= z^{\lambda} \int_{C_{i}} \zeta^{-\lambda - 1} \exp(i\zeta) d\zeta \notag\\
&= z^{\lambda}  i^{-\lambda} \int_0^{\infty} u^{-\lambda-1} \exp(-u) du
= z^{\lambda} i^{-\lambda} \Gamma(-\lambda),
\end{align}
where we let $C_w := \{tw | t \ge 0\}$ for $w \in \mathbb{H}$.
Hence we have that
\begin{equation}\label{eq:complex-power-upper-abs}
|z^{\lambda}| \le \frac{1}{|i^{-\lambda} \Gamma(-\lambda)|} \int_0^{\infty} |u^{-\lambda - 1} \exp(iuz)| du =  \frac{\Gamma\left(-\textup{Re}(\lambda)\right)}{|i^{-\lambda} \Gamma(-\lambda)|} \textup{Im}(z)^{\textup{Re}(\lambda)}.
\end{equation}
By this and the assumption, $Z^{\lambda} \in L^1$, and we see \eqref{eq:negative-upperhalf-1}. 
\end{proof}

We can generalize \eqref{eq:complex-power-upper-abs} to the case that $z \in -\mathbb{H}$.
For $\lambda \in U$, there exists a constant $C(\lambda)$ such that for $z \in \mathbb{C} \setminus \mathbb{R}$,
\begin{equation}\label{eq:complex-power-abs}
|z^{\lambda}| \le C(\lambda) |\textup{Im}(z)|^{\textup{Re}(\lambda)}.
\end{equation}
E.g., we can put $$C(\lambda) := \frac{\Gamma\left(-\textup{Re}(\lambda)\right) \exp(\pi |\textup{Im}(\lambda)|/2)}{|\Gamma(-\lambda)|}.$$

Now we state an application of Lemma \ref{lem:negative-basic-1}.

\begin{Prop}\label{prop:integral-neg-fractional}
Let $X$ be a real-valued random variable and $\varphi_X$ be the characteristic function of $X$.
Let $\lambda \in U$.
Let $\alpha \in \mathbb{H}$.
Then,
$(X+\alpha)^{\lambda} \in L^{\infty}$ and
\[ E\left[ (X+\alpha)^{\lambda} \right]
= \frac{i^{\lambda}}{\Gamma(-\lambda)} \int_0^{\infty}  t^{-\lambda-1} \varphi_X (t) \exp(i\alpha t) dt.  \]
\end{Prop}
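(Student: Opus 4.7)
The plan is to derive Proposition~\ref{prop:integral-neg-fractional} as a direct corollary of Lemma~\ref{lem:negative-basic-1}(i), applied to the $\mathbb{H}$-valued random variable $Z := X + \alpha$. Since $\alpha \in \mathbb{H}$ and $X$ is real-valued, $Z$ indeed takes values in $\mathbb{H}$, and crucially $\textup{Im}(Z) = \textup{Im}(\alpha) > 0$ is a deterministic positive constant.

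First I would verify the hypotheses of Lemma~\ref{lem:negative-basic-1}(i). The integrability condition $E[\textup{Im}(Z)^{\textup{Re}(\lambda)}] < +\infty$ is trivial: the expectation is simply the finite positive number $\textup{Im}(\alpha)^{\textup{Re}(\lambda)}$ (recall $\textup{Re}(\lambda) < 0$ but $\textup{Im}(\alpha) > 0$). The stronger conclusion $(X+\alpha)^\lambda \in L^\infty$ does not come from the lemma itself but is immediate from the pointwise bound \eqref{eq:complex-power-upper-abs}:
\[ |(X+\alpha)^\lambda| \le C(\lambda)\, \textup{Im}(\alpha)^{\textup{Re}(\lambda)}, \]
which is a deterministic constant.

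Next I would unfold the left-hand side of \eqref{eq:negative-upperhalf-1}. Setting $f(t) := E[\exp(-itZ)]$, and using that $\alpha$ acts only as a deterministic shift, I factor
\[ f(-u) = E[\exp(iuZ)] = \exp(i\alpha u)\, \varphi_X(u), \qquad u > 0, \]
so the definition of the fractional derivative of negative order gives
\[ \left.\frac{\partial^\lambda}{\partial t^\lambda} E[\exp(-itZ)]\right|_{t=0} = \frac{1}{\Gamma(-\lambda)} \int_0^{\infty} u^{-\lambda-1} \varphi_X(u)\, \exp(i\alpha u)\, du. \]
Lemma~\ref{lem:negative-basic-1}(i) identifies this quantity with $i^{-\lambda} E[(X+\alpha)^\lambda]$, and multiplying through by $i^\lambda$ yields the asserted formula.

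I do not expect any substantive obstacle: once the observation is made that $\textup{Im}(Z)$ is deterministic, every hypothesis becomes trivial and the entire argument is bookkeeping. The only points requiring a little care are the sign conventions, namely that $f(-u) = E[\exp(iuZ)]$ rather than $E[\exp(-iuZ)]$, and that the factor $i^{-\lambda}$ appearing on the right-hand side of \eqref{eq:negative-upperhalf-1} inverts to $i^{\lambda}$ when one solves for $E[(X+\alpha)^{\lambda}]$.
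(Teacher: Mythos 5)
Your proof is correct and follows essentially the same approach as the paper: apply Lemma~\ref{lem:negative-basic-1}(i) to $Z = X + \alpha$, noting that $\textup{Im}(Z) = \textup{Im}(\alpha)$ is a deterministic positive constant so the moment hypothesis is trivial, and obtain the $L^\infty$ bound from \eqref{eq:complex-power-upper-abs} (the paper cites the equivalent bound \eqref{eq:complex-power-abs}). You simply spell out the bookkeeping — unfolding $f(-u) = \varphi_X(u)\exp(i\alpha u)$ and solving for $E[(X+\alpha)^\lambda]$ — that the paper leaves implicit.
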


\begin{proof}
We see that by \eqref{eq:complex-power-abs},
$$\sup_{x \in \mathbb{R}} \left|(x + \alpha)^{\lambda}\right| \le C(\lambda) |\textup{Im}(\alpha)|^{\textup{Re}(\lambda)}.$$ 
We now apply Lemma \ref{lem:negative-basic-1} (i) to the case that $Z = X + \alpha$ and we have the assertion.
\end{proof}

\begin{Rem}
Let $\alpha \in \mathbb{H}$ and $\alpha \to 0$.
Then, formally,
$$E\left[ X^{\lambda} \right] = \frac{i^{\lambda}}{\Gamma(-\lambda)} \int_0^{\infty}  t^{-\lambda-1} \varphi_X (t) dt.$$ 
This is justified if $E\left[|X|^{\textup{Re}(\lambda)}\right] < +\infty$ and $t^{-\textup{Re}(\lambda)-1} \varphi_X(t) \in L^1$.
If $-1 < \textup{Re}(\lambda) < 0$ and $E\left[|X|^{\textup{Re}(\lambda)}\right] < +\infty$,
then, $\int_0^{\infty}  t^{-\lambda-1} \varphi_X (t) dt$ exists as a Riemann improper integral.
See \cite[Section 5]{Wolfe1975}.
The contour in the integration in \cite[Section 5]{Wolfe1975} is different from the contour in the integration in \eqref{eq:basic-neg}.
\end{Rem}

We now give an application to the power mean of random variables.

\begin{Thm}\label{thm:QAM-nega-upper}
Let $-1 \le p  < 0$.
Let $n \ge 2$.
Let $Z_1, Z_2, \dots, Z_n$ be i.i.d. $\mathbb{H}$-valued random variables such that $E\left[ \left(\textup{Im}(Z_1)^{1/p} |Z_1|^{1-1/p} \right)^{1/n}  \right] < +\infty$.
Then,
\[ \left.\frac{\partial^{1/p}}{\partial t^{1/p}} E\left[ \exp\left(i \frac{t}{n} Z_1^p\right) \right]^n\right|_{t=0}
= (-i)^{-1/p} E\left[\left( \frac{1}{n} \sum_{j=1}^{n} Z_j^p \right)^{1/p}\right].\]
\end{Thm}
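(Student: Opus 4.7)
The plan is to apply Lemma \ref{lem:negative-basic-1} (ii) to the random variable $Z := \frac{1}{n}\sum_{j=1}^{n} Z_j^p$ with complex order $\lambda := 1/p$. Since $-1 \le p < 0$, we have $\textup{Re}(1/p) = 1/p \le -1 < 0$, so $\lambda \in U$. Once the hypotheses of the lemma are checked, its right-hand side gives $(-i)^{-1/p} E[Z^{1/p}]$, which is the target expression, while the i.i.d.\ assumption collapses its left-hand side via $E[\exp(itZ)] = E[\exp(itZ_1^p/n)]^n$.

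First I would verify that $Z$ takes values in $-\mathbb{H}$. Writing $Z_j = |Z_j|e^{i\theta_j}$ with $\theta_j \in (0,\pi)$, we get $Z_j^p = |Z_j|^p e^{ip\theta_j}$, whose imaginary part $|Z_j|^p \sin(p\theta_j)$ is strictly negative because $p\theta_j \in (-\pi,0)$. Hence each $Z_j^p$ lies in $-\mathbb{H}$, and so does the convex combination $Z$.

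The only real work lies in checking the moment assumption $E[(-\textup{Im}(Z))^{1/p}] < +\infty$ demanded by Lemma \ref{lem:negative-basic-1} (ii). I would use the AM--GM inequality on the positive quantities $-\textup{Im}(Z_j^p)$:
\[
-\textup{Im}(Z) = \frac{1}{n}\sum_{j=1}^n \bigl(-\textup{Im}(Z_j^p)\bigr) \ge \prod_{j=1}^n \bigl(-\textup{Im}(Z_j^p)\bigr)^{1/n}.
\]
Since $1/p < 0$ reverses inequalities for positive bases, this gives
\[
(-\textup{Im}(Z))^{1/p} \le \prod_{j=1}^n \bigl(-\textup{Im}(Z_j^p)\bigr)^{1/(pn)},
\]
and independence together with identical distribution reduces $E[(-\textup{Im}(Z))^{1/p}]$ to $E[(-\textup{Im}(Z_1^p))^{1/(pn)}]^n$.

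To connect this single-variable moment with the stated hypothesis, I would establish the elementary inequality $\sin(|p|\theta) \ge |p|\sin\theta$ for $\theta \in (0,\pi)$ and $|p| \in (0,1]$ by noting that $f(\theta) := \sin(|p|\theta) - |p|\sin\theta$ satisfies $f(0) = 0$ and $f'(\theta) = |p|(\cos(|p|\theta) - \cos\theta) \ge 0$ on $(0,\pi)$. Applying this to $-\textup{Im}(Z_1^p) = |Z_1|^p \sin(|p|\arg Z_1)$ yields
\[
-\textup{Im}(Z_1^p) \ge |p|\,|Z_1|^{p-1} \textup{Im}(Z_1),
\]
and raising to the negative power $1/(pn)$ produces
\[
(-\textup{Im}(Z_1^p))^{1/(pn)} \le |p|^{1/(pn)} \bigl(\textup{Im}(Z_1)^{1/p} |Z_1|^{1-1/p}\bigr)^{1/n},
\]
whose expectation is finite by hypothesis. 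Chaining these bounds shows $E[(-\textup{Im}(Z))^{1/p}] < +\infty$, so Lemma \ref{lem:negative-basic-1} (ii) applies and the claim follows. The main obstacle is this moment verification, specifically the need to pass from a hypothesis on $Z_1$ to a moment condition on the $n$-fold average $Z$; the AM--GM step together with the trigonometric inequality $\sin(|p|\theta) \ge |p|\sin\theta$ is the key ingredient that makes the exponents align with the quantity $\textup{Im}(Z_1)^{1/p}|Z_1|^{1-1/p}$ appearing in the statement.
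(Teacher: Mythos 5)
Your proof is correct and follows essentially the same route as the paper's: reduce to Lemma \ref{lem:negative-basic-1} (ii) for $Z=\frac{1}{n}\sum_j Z_j^p$, use AM--GM plus independence to pass to the single-variable moment $E[(-\textup{Im}(Z_1^p))^{1/(pn)}]^n$, and control that via the polar decomposition of $Z_1^p$. The only (harmless) difference is that where the paper simply asserts $\sup_{\theta\in(0,\pi)}\bigl(\sin\theta/\sin(-p\theta)\bigr)^{-1/p}<+\infty$, you make the bound explicit through the inequality $\sin(|p|\theta)\ge |p|\sin\theta$, which yields the concrete constant $|p|^{1/(pn)}$.
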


\begin{proof}
Let $Z := \frac{1}{n} \sum_{j=1}^{n} Z_j^p$.
Since $-1 \le p < 0$, $Z$ is $(-\mathbb{H})$-valued.
We argue as in the proof of \cite[Proposition 3.3 (ii)]{Akaoka2022}.
By the geometric mean-arithmetic mean inequality,
\begin{equation}\label{eq:gm-hm}
E\left[ \left(-\textup{Im}(Z)\right)^{1/p}\right] \le E\left[ \left( -\textup{Im}(Z_1^p) \right)^{1/(np)}\right]^{n}.
\end{equation}

Let $r_1 > 0$ and $\theta_1 \in (0, \pi)$ such that $Z_1 = r_1 \exp(i\theta_1)$.
Then,
\begin{equation}\label{eq:polar-im}
\left(-\textup{Im}(Z_1^p) \right)^{1/p} = \textup{Im}(Z_1)^{1/p} |Z_1|^{1-1/p} \left(\frac{\sin\theta_1}{\sin(-p\theta_1)}\right)^{-1/p}.
\end{equation}
We see that $\sup_{\theta \in (0,\pi)} \left(\frac{\sin\theta}{\sin(-p\theta)}\right)^{-1/p} < +\infty$.
By this, \eqref{eq:polar-im}, and the assumption,
$E\left[ \left( -\textup{Im}(Z_1^p) \right)^{1/(np)}\right] < +\infty$, and hence, $E\left[(-\textup{Im}(Z))^{1/p}\right] < +\infty$.
Therefore we can apply Lemma \ref{lem:negative-basic-1} (ii), and we have the assertion.
\end{proof}

We now consider the continuity of the expectation of the power mean with respect to the parameter.

\begin{Prop}\label{prop:neg-conti-parameter}
Let $-1 < p_0  < 0$.
Let $n \ge 2$.
Let $Z_1, Z_2, \dots, Z_n$ be i.i.d. $\mathbb{H}$-valued random variables such that
$$\sup_{p \in (p_0 - \varepsilon_0, p_0 + \varepsilon_0)} E\left[ \left(\textup{Im}(Z_1)^{1/p} |Z_1|^{1-1/p} \right)^{1/n + \varepsilon_0}  \right] < +\infty$$
for some $\varepsilon_0 > 0$.
Then,
$E\left[\left( \frac{1}{n} \sum_{j=1}^{n} Z_j^p \right)^{1/p}\right]$ is continuous at $p = p_0$ as a function of $p$.
\end{Prop}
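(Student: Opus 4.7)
The plan is to verify the hypotheses of the Vitali convergence theorem for the family $M_p := \left( (1/n)\sum_{j=1}^{n} Z_j^p \right)^{1/p}$ indexed by $p$ in a small neighborhood of $p_0$: namely almost sure convergence $M_p \to M_{p_0}$ together with uniform integrability, which I would obtain by an $L^{1+n\varepsilon_0}$ bound.

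For the almost-sure convergence, I would fix $\delta \in (0, \varepsilon_0)$ small enough that $(p_0 - \delta, p_0 + \delta) \subset (-1, 0)$. For each $Z_j \in \mathbb{H}$, the map $p \mapsto Z_j^p = \exp(p \log Z_j)$ is continuous; since $Z_j \in \mathbb{H}$ and $p < 0$, each $Z_j^p$ lies in the open half-plane $-\mathbb{H}$, and the convex combination $(1/n)\sum Z_j^p$ therefore also lies in $-\mathbb{H}$, safely away from the branch cut of the principal logarithm. Continuity of $z \mapsto z^{1/p}$ on $-\mathbb{H}$, jointly in $z$ and $p$, then yields $M_p \to M_{p_0}$ pointwise.

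For the uniform integrability, set $Z := (1/n) \sum Z_j^p$. Since $1/p$ is real and $Z \in -\mathbb{H}$, the bound \eqref{eq:complex-power-abs} gives $|Z^{1/p}| \le C(1/p)(-\textup{Im}(Z))^{1/p}$. Since $1/p < 0$, AM-GM applied as in \eqref{eq:gm-hm} to the positive quantities $-\textup{Im}(Z_j^p)$ yields
\[ (-\textup{Im}(Z))^{1/p} \le \prod_{j=1}^{n} \bigl(-\textup{Im}(Z_j^p)\bigr)^{1/(np)}. \]
Plugging in the polar decomposition \eqref{eq:polar-im} in each factor and using that $\sup_{\theta \in (0,\pi)} (\sin\theta/\sin(-p\theta))^{-1/p}$ is finite and depends continuously on $p$, hence is uniformly bounded for $p$ near $p_0$, I obtain
\[ |M_p| \le K_p \prod_{j=1}^{n} \bigl(\textup{Im}(Z_j)^{1/p} |Z_j|^{1-1/p}\bigr)^{1/n} \]
for a constant $K_p$ uniformly bounded on a neighborhood of $p_0$. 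Raising to the exponent $1 + n\varepsilon_0$, taking expectations, and using that the $Z_j$ are i.i.d.\ factorizes the product:
\[ E\bigl[|M_p|^{1+n\varepsilon_0}\bigr] \le K_p^{1+n\varepsilon_0}\, E\bigl[(\textup{Im}(Z_1)^{1/p}|Z_1|^{1-1/p})^{1/n+\varepsilon_0}\bigr]^n, \]
which is uniformly bounded in $p \in (p_0 - \varepsilon_0, p_0 + \varepsilon_0)$ by hypothesis. An $L^{1+\eta}$ bound implies uniform integrability, so Vitali's theorem (applied componentwise to real and imaginary parts) gives $E[M_p] \to E[M_{p_0}]$.

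The main obstacle, in my view, is the exponent bookkeeping: arranging that the single moment assumption on the weight $\textup{Im}(Z_1)^{1/p}|Z_1|^{1-1/p}$ translates through the bound \eqref{eq:complex-power-abs}, the AM-GM step, the polar decomposition, and independence into a genuine $L^{1+\eta}$ bound on $M_p$, while simultaneously verifying that all auxiliary $p$-dependent constants (the factor $C(1/p)$ from \eqref{eq:complex-power-abs} and the trigonometric supremum) remain uniformly bounded as $p$ ranges over a neighborhood of $p_0$.
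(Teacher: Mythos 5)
Your argument is correct, but it takes a genuinely different route from the paper's. The paper's proof works entirely through the fractional-calculus integral representation: it writes $E\left[\left(\frac{1}{n}\sum_j Z_j^p\right)^{1/p}\right] = \frac{1}{\Gamma(-1/p)}\int_0^{\infty} u^{-1/p-1} E\left[\exp(-iu Z^{(p)})\right]\,du$, dominates the integrand by $u^{-1/p-1} E\left[\exp(u\,\textup{Im}(Z^{(p)}))\right]$, shows via Fubini that the integral of this dominating function equals $\Gamma(-1/p)E\left[(-\textup{Im}(Z^{(p)}))^{-1/p}\right]$ and that this is continuous in $p$ (using \eqref{eq:gm-hm} and \eqref{eq:polar-im} exactly as you do), and then invokes the generalized dominated convergence theorem (Royden, Theorem 19 in Chapter 4) with a $p$-varying dominating function. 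You instead work directly at the level of the random variables $M_p$: almost-sure continuity in $p$ is elementary since $Z^{(p)}$ stays in the open half-plane $-\mathbb H$ away from the branch cut, and uniform integrability comes from a uniform $L^{1+n\varepsilon_0}$ bound built from \eqref{eq:complex-power-abs}, the AM--GM step \eqref{eq:gm-hm}, the polar decomposition \eqref{eq:polar-im}, and independence; Vitali then finishes. The two approaches use the same key pointwise estimates but package them differently. Your route is more elementary and self-contained --- it avoids both the Fourier representation and the generalized Lebesgue convergence theorem --- and it makes clearer exactly where the extra $\varepsilon_0$ in the exponent $1/n+\varepsilon_0$ is spent, namely on de la Vall\'ee Poussin uniform integrability. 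The paper's route is chosen to stay within the fractional-calculus framework that is the theme of the article. One small bookkeeping point worth noting explicitly in your write-up: the trigonometric supremum $S(p):=\sup_{\theta\in(0,\pi)}\bigl(\sin\theta/\sin(-p\theta)\bigr)^{-1/p}$ is finite and continuous on $(-1,0)$ (the quotient tends to $1/(-p)$ as $\theta\to 0$ and to $0$ as $\theta\to\pi$), and $C(1/p)=1$ since $1/p$ is real, so all $p$-dependent constants are indeed locally bounded as you assert.
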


\begin{proof}
Let $-1 < p < 0$.
Let $Z^{(p)} := \frac{1}{n} \sum_{j=1}^{n} Z_j^p$.
Since
\[ E\left[\left( \frac{1}{n} \sum_{j=1}^{n} Z_j^p \right)^{1/p}\right] = \frac{1}{\Gamma(-1/p)} \int_0^{\infty} u^{-1/p - 1} E\left[ \exp(-iu Z^{(p)}) \right] du, \]
it suffices to show the continuity of $\int_0^{\infty} u^{-1/p - 1} E\left[ \exp(-iu Z^{(p)}) \right] du$ at $p = p_0$.
By Fubini's theorem,
\[ \int_0^{\infty} u^{-1/p - 1} E\left[\exp\left(u \textup{Im}(Z^{(p)})\right)\right] du = \Gamma\left(-\frac{1}{p}\right) E\left[ \left(-\textup{Im}(Z^{(p)}) \right)^{-1/p}\right]. \]
By \eqref{eq:gm-hm}, \eqref{eq:polar-im}, and the assumption, 
$E\left[ \left(-\textup{Im}(Z^{(p)}) \right)^{-1/p}\right]$ and $\int_0^{\infty} u^{-1/p - 1} E\left[\exp\left(u \textup{Im}(Z^{(p)})\right)\right] du$ are continuous at $p = p_0$.
We remark that $\left| E\left[ \exp(-iu Z^{(p)}) \right] \right| \le E\left[\exp\left(u \textup{Im}(Z^{(p)}) \right)\right]$.
By the Lebesgue convergence theorem, $E\left[ \exp(-iu Z^{(p)}) \right]$ and $u^{-1/p - 1} E\left[ \exp(-iu Z^{(p)}) \right]$, are continuous with respect to $p$.
Now we can apply a generalized Lebesgue convergence theorem (see \cite[Theorem 19 in Chapter 4]{Royden2010} for example),
and we see that $\int_0^{\infty} u^{-1/p - 1} E\left[ \exp(-iu Z^{(p)}) \right] du$ is continuous at $p = p_0$.
\end{proof}

We state a result applicable to general distributions on $\mathbb R$ satisfying an integrability condition.

\begin{Cor}\label{cor:neg-gen-real}
Let $\alpha \in \mathbb H$.
Then, we have the following:\\
(i) Let $-1 \le p  < 0$.
Let $X_1, X_2, \dots, X_n$ be i.i.d. real-valued random variables such that $X_1 \in L^{(p-1)/(np)}$.
Then,
\[ \left.\frac{\partial^{1/p}}{\partial t^{1/p}} E\left[ \exp\left(i \frac{t}{n} (X_1 + \alpha)^p\right) \right]^n\right|_{t=0}
= (-i)^{1/p} E\left[\left( \frac{1}{n} \sum_{j=1}^{n} (X_j + \alpha)^p \right)^{1/p}\right].\]
(ii) Let $r > 2/n$.
Assume that $X_1, X_2, \dots, X_n$ be i.i.d. real-valued random variables such that $X_1 \in L^{r}$.
Then,
$E\left[\left( \frac{1}{n} \sum_{j=1}^{n} (X_j + \alpha)^p \right)^{1/p}\right]$ is continuous with respect to $p$ on $(-1, -\frac{1}{nr-1})$.
\end{Cor}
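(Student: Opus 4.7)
The plan is to apply Theorem \ref{thm:QAM-nega-upper} and Proposition \ref{prop:neg-conti-parameter} to $Z_j := X_j + \alpha$, translating each abstract integrability hypothesis into a condition on $|X_1|$ alone.

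For part (i), the key observation is that $Z_j$ takes values in $\mathbb{H}$ and $\textup{Im}(Z_j) = \textup{Im}(\alpha)$ is a fixed positive constant. Consequently the moment hypothesis
$$E\left[\bigl(\textup{Im}(Z_1)^{1/p}\,|Z_1|^{1-1/p}\bigr)^{1/n}\right] < +\infty$$
of Theorem \ref{thm:QAM-nega-upper} reduces to $\textup{Im}(\alpha)^{1/(np)}\, E\bigl[|X_1 + \alpha|^{(1-1/p)/n}\bigr] < +\infty$. Since $L^r$-membership is unchanged by adding a complex constant and $(1-1/p)/n = (p-1)/(np)$, this is exactly the assumed $X_1 \in L^{(p-1)/(np)}$. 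Theorem \ref{thm:QAM-nega-upper} then gives (i) directly.

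For part (ii), I would apply Proposition \ref{prop:neg-conti-parameter} with the same $Z_j$. With $\textup{Im}(Z_1)=\textup{Im}(\alpha)$ constant and $p$ confined to a small neighborhood of $p_0$ bounded away from $0$, the factor $\textup{Im}(\alpha)^{(1/p)(1/n+\varepsilon_0)}$ is bounded, so the uniform hypothesis collapses to
$$\sup_{p \in (p_0 - \varepsilon_0,\, p_0 + \varepsilon_0)} E\left[|X_1 + \alpha|^{(1-1/p)(1/n + \varepsilon_0)}\right] < +\infty.$$
The constraint $p_0 < -1/(nr-1)$ rearranges to $(1-1/p_0)/n < r$, so by continuity of $p \mapsto (1-1/p)/n$ on $(-1,0)$ I can choose $\varepsilon_0 > 0$ small enough that $(1-1/p)(1/n+\varepsilon_0) \le s$ for some fixed $s < r$ throughout the neighborhood. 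The elementary bound $|X_1+\alpha|^{s'} \le 1 + |X_1+\alpha|^s$ valid for $s' \in [0,s]$, combined with $X_1 \in L^r \subset L^s$ (on a probability space) and the invariance of $L^r$ under constant translation, then yields the uniform bound, and Proposition \ref{prop:neg-conti-parameter} gives continuity at the arbitrary $p_0 \in (-1,-1/(nr-1))$.

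The main technical point is the uniform-in-$p$ moment control in part (ii); the restriction $r > 2/n$ enters precisely to make the interval $(-1,\,-1/(nr-1))$ nonempty, and beyond these bookkeeping details the argument is a clean specialization of the hypotheses of Theorem \ref{thm:QAM-nega-upper} and Proposition \ref{prop:neg-conti-parameter}.
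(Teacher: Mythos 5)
Your proposal is correct and follows essentially the same route as the paper: part (i) is the specialization of Theorem \ref{thm:QAM-nega-upper} to $Z_j = X_j+\alpha$ using that $\textup{Im}(Z_1)=\textup{Im}(\alpha)$ is constant, and part (ii) verifies the uniform moment hypothesis of Proposition \ref{prop:neg-conti-parameter} from $(1-1/p_0)/n < r$. The only cosmetic difference is that you control the lower-order moments via $|x|^{s'} \le 1+|x|^{s}$ where the paper invokes H\"older's inequality; both are equivalent bookkeeping.
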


\begin{proof}
The proof is the same as in the proof of Theorem  \ref{thm:QAM-nega-upper}.
If $Z_{j} := X_j + \alpha$, then, $\textup{Im}(Z_1)^{1/p} = \textup{Im}(\alpha)^{1/p} < +\infty$.
By substituting this into \eqref{eq:polar-im},
we have assertion (i).
We show (ii).
Let $p \in (-1, -\frac{1}{nr-1})$.
Then, by the H\"older inequality, for sufficiently small $\varepsilon > 0$,
\[ \sup_{p^{\prime} \in (p-\varepsilon, p+\varepsilon)} E\left[ |X_1 + \alpha|^{(1-1/p^{\prime})(1/n + \varepsilon)}  \right] \le \sup_{s \in [0,1]} E\left[|X_1 + \alpha|^r\right]^{s} < +\infty. \]
Hence we can apply Proposition \ref{prop:neg-conti-parameter}.
\end{proof}

\begin{Rem}
If $p \in U$ and $p \notin \mathbb{R}$,
then, it does not necessarily hold that $\textup{Im}(Z_1^p) < 0$, $P$-a.s., even if $Z_1 \in \mathbb H$. 
The above proof does not apply to this case.
When we consider the power means, it is natural to consider that $p \in \mathbb{R}$.
One reason is that it does not satisfy \cite[Assumption 2.1]{Akaoka2022},
more specifically,
if we let $f(z) := z^p$, then, $f(\mathbb H)$ can be {\it non-convex}.
Indeed, $0 \notin f(\mathbb H)$, but on the other hand,
there exist two points $x_1 < 0 $ and $x_2 > 0$ such that $x_1, x_2 \in f(\mathbb H)$.
Due to the branch cut, $\log\left(\sum_{j=1}^{n} Z_j^p \right)$ is not continuous with respect to $p$.
The power means of complex orders are interesting but much harder to deal with.
\end{Rem}

\section{Fractional derivative of positive order}\label{sec:pos}

We adopt the following definition as a fractional derivative of positive order.
As in the case of negative order, we consider the fractional derivative of complex order.
Here the positive order means that the real part of the complex order is positive. 
We adopt Marchaud's fractional derivative (\cite{Marchaud1927}) instead of the Riemann-Liouville fractional derivative, because it involves the derivative after an integration in order to define the fractional operator. 
Recall that $V = \{\lambda \in \mathbb{C} : \textup{Re}(\lambda) > 0\}$.

\begin{Def}
Let $k$ be a non-negative integer. 
Let $\lambda \in V$ such that $\lambda = k + \delta$ and  $0 < \textup{Re}(\delta) < 1$. 
Let $f \in C^k \left((-\infty,0] \right)$.
Then,
\[ \left. \frac{\partial^{k+\delta}}{\partial t^{k+\delta}} f(t)\right|_{t=0} := \frac{\delta}{\Gamma(1-\delta)} \int_0^{\infty}  \frac{f^{(k)}(0) - f^{(k)}(-u)}{u^{1+\delta}} du. \]
\end{Def}

\begin{Lem}\label{lem:positive-basic-1}
Let $\lambda \in V$.
We assume that $\textup{Re}(\lambda) \notin \mathbb{N}$. \\ 
Then, we have the following assertions:\\
(i) Let $Z$ be an $\overline{\mathbb{H}}$-valued random variable.
Assume that $E\left[|Z|^{\textup{Re}(\lambda)} \right] < +\infty$.
Then,
\[ \left.\frac{\partial^{\lambda}}{\partial t^{\lambda}} E[\exp(-itZ)]\right|_{t=0} = (-i)^{\lambda} E\left[Z^{\lambda}\right]. \] \\
(ii) Let $Z$ be an $\overline{-\mathbb{H}}$-valued random variable.
Assume that $E\left[|Z|^{\textup{Re}(\lambda)} \right] < +\infty$.
Then,
\[ \left.\frac{\partial^{\lambda}}{\partial t^{\lambda}} E[\exp(itZ)]\right|_{t=0} = i^{\lambda} E\left[Z^{\lambda}\right]. \]
\end{Lem}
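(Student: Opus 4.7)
The plan is to mirror the proof of Lemma \ref{lem:negative-basic-1}, using the fact that the Marchaud subtraction built into the definition compensates for the divergence of $u^{-1-\delta}$ at the origin, so essentially the same contour-rotation strategy still works for $\textup{Re}(\lambda)>0$. I will establish (i); assertion (ii) follows by applying (i) to $-Z$.

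Write $f(t):=E[\exp(-itZ)]$ and $\lambda=k+\delta$ with $0<\textup{Re}(\delta)<1$. The first step is to obtain $f^{(k)}(t)=(-i)^k E[Z^k\exp(-itZ)]$ by repeatedly differentiating under the expectation; this is legitimate because $E[|Z|^k]<\infty$ (since $k<\textup{Re}(\lambda)$), so the dominated convergence theorem applies. Substituting $f^{(k)}(0)=(-i)^k E[Z^k]$ and $f^{(k)}(-u)=(-i)^k E[Z^k\exp(iuZ)]$ into Marchaud's formula reduces the claim to
\[
\frac{\delta}{\Gamma(1-\delta)} \int_0^{\infty} \frac{E[Z^k(1-\exp(iuZ))]}{u^{1+\delta}}\,du \;=\; (-i)^\delta E[Z^\lambda].
\]

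Next I would justify Fubini and evaluate the inner integral. For $z\in\overline{\mathbb{H}}$ and $u>0$ one has $\textup{Re}(iuz)=-u\,\textup{Im}(z)\le 0$, hence $|\exp(iuz)|\le 1$ and $|1-\exp(iuz)|\le \min(|uz|,2)$. The substitution $s=u|z|$ gives $\int_0^{\infty} u^{-1-\textup{Re}(\delta)}|1-\exp(iuz)|\,du\le C_\delta|z|^{\textup{Re}(\delta)}$ for a finite constant $C_\delta$, so the iterated absolute integral is dominated by $C_\delta E[|Z|^{k+\textup{Re}(\delta)}]=C_\delta E[|Z|^{\textup{Re}(\lambda)}]<\infty$. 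After exchanging, for $z\in\mathbb{H}\setminus\{0\}$ I would rotate the contour from $(0,\infty)$ onto the ray $\{t(-iz):t>0\}$; the arc at infinity vanishes because $\textup{Re}(-iz)>0$, and Cauchy's theorem together with integration by parts yields
\[
\int_0^{\infty}\frac{1-\exp(iuz)}{u^{1+\delta}}\,du \;=\; (-iz)^\delta\int_0^{\infty}\frac{1-e^{-s}}{s^{1+\delta}}\,ds \;=\; \frac{\Gamma(1-\delta)}{\delta}\,(-iz)^\delta,
\]
with $(-iz)^\delta$ interpreted via the principal branch (valid as $-iz\in V$). For $z$ on the real axis the same identity follows by continuity, replacing $Z$ by $Z+i\varepsilon$ and letting $\varepsilon\downarrow 0$, with the $|z|^{\textup{Re}(\delta)}$-bound above supplying uniform integrability.

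To finish, a direct calculation using the principal logarithm shows $z^k(-iz)^\delta=(-i)^\delta z^\lambda$ for every $z\in\overline{\mathbb H}$, together with the scalar identity $(-i)^k(-i)^\delta=(-i)^\lambda$. Feeding this back, the Gamma factors and the $\delta$ cancel and the right-hand side becomes $(-i)^\lambda E[Z^\lambda]$, as required. The main obstacle will be the interplay between Fubini and the contour rotation at the boundary of $\mathbb H$: the rotation is transparent on the open upper half-plane, but when $Z$ charges $\mathbb{R}$ the point $-iZ$ sits on $\partial V$, so one must either push through the continuity/approximation argument above or carry out the rotation directly along the imaginary axis, relying on the subtracted form $1-e^{iuz}$ to tame the resulting oscillation.
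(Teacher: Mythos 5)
Your argument is correct and is essentially the paper's own proof: the same $k$-fold differentiation under the expectation using $E[|Z|^{\mathrm{Re}(\lambda)}]<+\infty$, the same domination $|1-\exp(iuz)|\le\min(|uz|,2)$ to justify Fubini, and the same contour rotation giving $\int_0^{\infty}u^{-1-\delta}(1-\exp(iuz))\,du=\frac{\Gamma(1-\delta)}{\delta}(-iz)^{\delta}$, which the paper records as $\frac{\Gamma(1-\delta)}{\delta}z^{\delta}i^{-\delta}$. The one caveat is your derivation of (ii) from (i) via $Z\mapsto -Z$: the needed identity $(-i)^{\lambda}(-z)^{\lambda}=i^{\lambda}z^{\lambda}$ holds for $z\in-\mathbb{H}$ and for $z>0$, but fails on the negative real axis under the convention $\arg z\in(-\pi,\pi]$ (there $(-z)^{\lambda}=e^{-i\pi\lambda}z^{\lambda}$, not $e^{i\pi\lambda}z^{\lambda}$), so any mass of $Z$ on $(-\infty,0)$ requires separate comment --- a boundary subtlety that the paper's own ``the proof of (ii) is similar'' also glosses over.
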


\begin{proof}
We show (i).
The proof of (ii) is similar to it.
Let $f(t) := E[\exp(-itZ)]$.
Let $k$ be the integer part of $\textup{Re}(\lambda)$ and let $\delta := \lambda - k$. 
Then, $0 < \textup{Re}(\delta) < 1$. 
By the assumption  that $E\left[|Z|^{\textup{Re}(\lambda)} \right] < +\infty$,
we see that
$f \in C^k ((-\infty,0])$  and furthermore $f^{(k)}(t) = (-i)^k E[Z^k \exp(-itZ)]$ for $t \le 0$.

It holds that
\[ \int_0^{\infty} \left|\frac{1 - \exp(iuz)}{u^{1+\delta}}\right| du = |z|^{\textup{Re}(\delta)} \int_0^{\infty} \frac{|1-\exp(it e^{i\theta})|}{t^{1+ \textup{Re}(\delta)}} dt, \ \ \ z = re^{i\theta}, \ \theta \in [0, \pi].\]

If $0 \le t \le 1$, then,
$\frac{|1-\exp(it e^{i\theta})|}{t^{1+ \textup{Re}(\delta)}} \le \frac{e}{t^{\textup{Re}(\delta)}}$.
If $t > 1$, then, $\frac{|1-\exp(it e^{i\theta})|}{t^{1+ \textup{Re}(\delta)}} \le \frac{2}{t^{1+\textup{Re}(\delta)}}$.
Hence, we see that
\[ \int_{\overline{\mathbb H}} \int_0^{\infty} |z|^{k} \left|\frac{1 - \exp(iuz)}{u^{1+\delta}}\right| du P^Z (dz) < +\infty. \]

Therefore we can use Fubini's theorem and we obtain that
\[ \left. \frac{\partial^{\lambda}}{\partial t^{\lambda}} E[\exp(-itZ)]\right|_{t=0} =  \frac{(-i)^k \delta}{\Gamma(1-\delta)} \int_0^{\infty} \frac{E[Z^k (1-\exp(iuZ))]}{u^{1+\lambda}} du \]
\[ =  \frac{(-i)^k \delta}{\Gamma(1-\delta)} E\left[ Z^k \int_0^{\infty} \frac{1-\exp(iuZ)}{u^{1+\delta}} du\right].\]

By the Cauchy integral theorem, 
\begin{align}\label{eq:basic-posi}
\int_0^{\infty} \frac{1-\exp(iuz)}{u^{1+\delta}} du &= z^{\delta} \int_{C_z} \frac{1-\exp(i\zeta)}{\zeta^{1+\delta}} d\zeta \notag\\
&= z^{\delta} i^{-\delta} \int_0^{\infty} \frac{1-e^{-t}}{t^{1+\delta}} dt = \frac{\Gamma(1-\delta)}{\delta} z^{\delta} i^{-\delta}.
\end{align}
Thus we have the assertion.
\end{proof}

We now give applications of Lemma \ref{lem:positive-basic-1} to the power mean of random variables.
By Lemma \ref{lem:positive-basic-1} (i), 
we immediately see that
\begin{Prop}\label{prop:integral-posi-fractional}
Let $\alpha \in \overline{\mathbb{H}}$ and $\lambda \in V$. 
Assume that $\textup{Re}(\lambda) \notin \mathbb{N}$ and that $X \in L^{\textup{Re}(\lambda)}$. 
Let $k$ be the integer part of $\textup{Re}(\lambda)$ and let $\delta := \lambda - k$.
Then,
\[ E[(X + \alpha)^{\lambda}] = \frac{i^{\delta} \delta}{\Gamma(1-\delta)} \int_0^{\infty}  \frac{E\left[ (X + \alpha)^k \left(1 - \exp(iu(X+\alpha)) \right) \right] }{u^{1+\delta}} du. \]
\end{Prop}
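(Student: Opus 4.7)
The plan is a direct application of Lemma \ref{lem:positive-basic-1} (i) with $Z := X + \alpha$, followed by unwinding Marchaud's definition of the fractional derivative. To set up the lemma, I would first note that since $X$ is real-valued and $\alpha \in \overline{\mathbb{H}}$, the random variable $Z$ takes values in $\overline{\mathbb{H}}$, and the triangle inequality $|Z| \le |X| + |\alpha|$ together with $X \in L^{\textup{Re}(\lambda)}$ yields the required moment hypothesis $E[|Z|^{\textup{Re}(\lambda)}] < +\infty$. The condition $\textup{Re}(\lambda) \notin \mathbb{N}$ is exactly what lets us write $\lambda = k + \delta$ with $0 < \textup{Re}(\delta) < 1$, as in the definition of Marchaud's derivative.

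Next, I would compute the left-hand side of the lemma using Marchaud's definition. Setting $f(t) := E[\exp(-itZ)]$, the moment condition and dominated convergence give $f \in C^k((-\infty, 0])$ with $f^{(k)}(t) = (-i)^k E[Z^k \exp(-itZ)]$ for $t \le 0$. Evaluating at $t = 0$ and $t = -u$ and subtracting,
\[ f^{(k)}(0) - f^{(k)}(-u) = (-i)^k E\bigl[Z^k (1 - \exp(iuZ))\bigr]. \]
Plugging this into Marchaud's definition gives
\[ \left.\frac{\partial^\lambda}{\partial t^\lambda} E[\exp(-itZ)]\right|_{t=0} = \frac{(-i)^k \delta}{\Gamma(1-\delta)} \int_0^\infty \frac{E\bigl[Z^k(1-\exp(iuZ))\bigr]}{u^{1+\delta}}\, du, \]
while Lemma \ref{lem:positive-basic-1} (i) identifies the left-hand side with $(-i)^\lambda E[Z^\lambda]$.

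It then remains to simplify the scalar $(-i)^k/(-i)^\lambda$. Under the paper's principal-branch convention, $\log(-i) = -i\pi/2$, so that $(-i)^\lambda = \exp(-i\pi\lambda/2)$ and $(-i)^k = \exp(-i\pi k/2)$, hence $(-i)^k/(-i)^\lambda = \exp(i\pi\delta/2) = i^\delta$. Dividing both sides of the displayed identity by $(-i)^\lambda$ and substituting $Z = X + \alpha$ back in produces exactly the claimed formula. The only step that requires any care is this branch-of-logarithm bookkeeping; everything else is a mechanical substitution into Lemma \ref{lem:positive-basic-1} (i).
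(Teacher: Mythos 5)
Your proposal is correct and is exactly the paper's route: the paper derives this proposition as an immediate consequence of Lemma \ref{lem:positive-basic-1} (i) applied to $Z = X+\alpha$, with the integral form coming from unwinding Marchaud's definition as in that lemma's proof. Your verification of the moment hypothesis and the branch computation $(-i)^k/(-i)^{\lambda} = i^{\delta}$ just make explicit what the paper leaves implicit.
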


We now give an application to the power mean of random variables.
For $p = 0$,
we regard $\left(\frac{1}{n} \sum_{j=1}^{n} z_j^p \right)^{1/p}$ as the geometric mean $\prod_{j=1}^{n} z_j^{1/n}$ for $z_1,  \dots, z_n  \in \overline{\mathbb H}$.

\begin{Thm}\label{thm:QAM-posi-upper}
Let $0 < p \le 1$.
Let $Z_1, \dots Z_n$ be i.i.d. $\overline{\mathbb H}$-valued random variables such that $Z_1 \in L^{1}$.
Then,
\begin{equation}\label{eq:qam-eq-posi-upper}
\left.\frac{\partial^{1/p}}{\partial t^{1/p}} E\left[ \exp\left(-i \frac{t}{n} Z_1^p\right) \right]^n\right|_{t=0} = (-i)^{1/p} E\left[\left( \frac{1}{n} \sum_{j=1}^{n} Z_j^p \right)^{1/p}\right].
\end{equation}
Furthermore, $E\left[\left( \frac{1}{n} \sum_{j=1}^{n} Z_j^p \right)^{1/p}\right]$ is continuous with respect to $p$ on $[0,1]$.
\end{Thm}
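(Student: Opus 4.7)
The plan is to apply Lemma \ref{lem:positive-basic-1}(i) to $Z := \frac{1}{n}\sum_{j=1}^{n} Z_j^p$ with $\lambda = 1/p$. Two hypotheses need to be verified: that $Z$ is $\overline{\mathbb H}$-valued, and that $E[|Z|^{1/p}] < +\infty$. The first is immediate, since each $Z_j^p$ lies in the convex closed wedge $\{re^{i\theta} : r \ge 0,\ 0 \le \theta \le p\pi\} \subseteq \overline{\mathbb H}$, and so does the average. For the second, the triangle inequality combined with the power-mean inequality (valid because $1/p \ge 1$) gives
\[
|Z|^{1/p} \le \Bigl(\tfrac{1}{n}\sum_{j=1}^{n} |Z_j|^p\Bigr)^{1/p} \le \tfrac{1}{n}\sum_{j=1}^{n} |Z_j|,
\]
whose expectation equals $E[|Z_1|] < +\infty$. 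This delivers \eqref{eq:qam-eq-posi-upper} whenever $1/p \notin \mathbb N$.

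At the exceptional values $p = 1/m$, $m \in \mathbb N$, the fractional derivative collapses to the classical $m$-th derivative, and \eqref{eq:qam-eq-posi-upper} is obtained by differentiating $E[\exp(-i(t/n)Z_1^{1/m})]^n = E[\exp(-i(t/n)S)]$ (using independence) $m$ times at $t = 0$, where $S := \sum_{j=1}^{n} Z_j^{1/m}$; this exchange of derivative and expectation is justified by the $L^1$ bound $|S|^m \le n^{m-1}\sum_j |Z_j|$, and both sides agree after rescaling by $n^{-m}$.

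For the continuity of $p \mapsto E[(\tfrac{1}{n}\sum_j Z_j^p)^{1/p}]$ on $[0,1]$, the plan is a direct application of the dominated convergence theorem. The bound displayed above is uniform in $p \in (0,1]$, and the AM--GM inequality shows that the same function $\frac{1}{n}\sum_j |Z_j| \in L^1$ also dominates the geometric mean $\prod_j Z_j^{1/n}$; it therefore serves as a uniform $L^1$-dominating function on all of $[0,1]$. Pointwise continuity on $(0,1]$ is routine, because the argument of $\tfrac{1}{n}\sum_j Z_j^p$ stays in $[0, p\pi]$, which keeps $(\tfrac{1}{n}\sum_j Z_j^p)^{1/p}$ in $\overline{\mathbb H}$ and well away from the branch cut $(-\infty, 0)$. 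At $p_0 = 0$, one works on the a.s.\ event $\{Z_j \ne 0 \text{ for all } j\}$ via the Taylor expansion $Z_j^p = 1 + p\log Z_j + O(p^2)$, from which $(\tfrac{1}{n}\sum_j Z_j^p)^{1/p} \to \exp\bigl(\tfrac{1}{n}\sum_j \log Z_j\bigr) = \prod_j Z_j^{1/n}$; on the complementary event, $\tfrac{1}{n}\sum_j Z_j^p$ converges to a nonzero constant of modulus strictly less than $1$, so its $(1/p)$-th power vanishes and matches the geometric mean $0$.

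The main obstacle will be the $p \to 0$ end of the continuity statement, where one must simultaneously handle the singularity of $(\cdot)^{1/p}$, the branch-cut issue for the complex logarithm, and the degenerate configurations in which some $Z_j$ vanishes. Everything else is a direct application of Lemma \ref{lem:positive-basic-1}(i) or a routine use of the dominating function $\tfrac{1}{n}\sum_j |Z_j|$.
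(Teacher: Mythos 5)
Your proposal is correct and follows essentially the same route as the paper: apply Lemma \ref{lem:positive-basic-1}(i) to $Z=\frac{1}{n}\sum_j Z_j^p$ after checking $Z\in\overline{\mathbb H}$ and $E[|Z|^{1/p}]<+\infty$ via the bound $|Z|^{1/p}\le\frac{1}{n}\sum_j|Z_j|$, treat $p=1/m$ as an ordinary $m$-th derivative, and prove continuity by dominated convergence with that same dominating function, identifying the $p\to 0^+$ limit with the geometric mean. Your explicit treatment of the event where some $Z_j=0$ at the endpoint $p_0=0$ is a small refinement the paper leaves implicit, but the argument is the same.
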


If $p=1/m$ for some natural number $m$, then, $\frac{\partial^{1/p}}{\partial t^{1/p}} $ denotes the symbol for the ordinal $m$-th derivative. 
The proof of continuity with respect to the parameter does not involve fractional calculus. 

\begin{proof}
The case that $p=1/m$ for some natural number $m$ is just the $m$-th derivative of the characteristic function and the result is well-known \cite[Chapter 16]{Williams1991}. 
Let $Z := \frac{1}{n} \sum_{j=1}^{n} Z_j^p$.
Then, by noting $0 <  p < 1$, $Z \in \overline{\mathbb{H}}$.
By the convexity of $x \mapsto |x|^{1/p}$ and the assumption that $Z_1 \in L^1$, 
$E\left[|Z|^{1/p} \right] < +\infty$.
We can apply Lemma \ref{lem:positive-basic-1} (i) and we have \eqref{eq:qam-eq-posi-upper}.

We remark that $|Z_j| = (|Z_j|^p)^{1/p}$ and $1/p > 1$. 
Then, by the H\"older inequality,
\[ \left| \left( \frac{1}{n} \sum_{j=1}^{n} Z_j^p \right)^{1/p} \right| \le \left( \frac{1}{n} \sum_{j=1}^{n} |Z_j|^p \right)^{1/p} \le \frac{1}{n} \sum_{j=1}^{n} |Z_j|. \]
By the assumption, $\frac{1}{n} \sum_{j=1}^{n} |Z_j|$ is integrable.
We remark that
\[ \lim_{p \to +0} \left( \frac{1}{n} \sum_{j=1}^{n} Z_j^p \right)^{1/p} = \lim_{p \to +0} \exp\left(\frac{1}{p} \log\left(\frac{1}{n} \sum_{j=1}^{n} Z_j^p \right) \right) \]
\[= \exp\left( \left.\frac{d}{dp} \log\left(\frac{1}{n} \sum_{j=1}^{n} Z_j^p \right) \right|_{p=0} \right) = \exp\left(\frac{1}{n} \sum_{j=1}^{n} \log Z_j \right). \]
The continuity assertion follows from the Lebesgue convergence theorem.
\end{proof}

As in the negative case, we state a result applicable to general distributions on $\mathbb R$ satisfying an integrability condition.
We see that
\begin{Cor}\label{cor:QAM-posi-upper}
Let $0 < p < 1$.
Let $\alpha \in \overline{\mathbb{H}}$.
Let $X_1, \dots, X_n$ be i.i.d. $\overline{\mathbb H}$-valued random variables such that $X_1 \in L^{1}$. 
Then, $\left( \frac{1}{n} \sum_{j=1}^{n} (X_j + \alpha)^p \right)^{1/p} \in L^1$ and
\[ \left.\frac{\partial^{1/p}}{\partial t^{1/p}} E\left[ \exp\left(-i \frac{t}{n} (X_1+\alpha)^p\right) \right]^n\right|_{t=0}
= (-i)^{1/p} E\left[\left( \frac{1}{n} \sum_{j=1}^{n} (X_j + \alpha)^p \right)^{1/p}\right].\]
Furthermore, $E\left[\left( \frac{1}{n} \sum_{j=1}^{n} (X_j + \alpha)^p \right)^{1/p}\right]$ is continuous with respect to $p$ on $[0,1]$.
\end{Cor}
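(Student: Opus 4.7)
The plan is to reduce this corollary to Theorem~\ref{thm:QAM-posi-upper} by setting $Z_j := X_j + \alpha$ for $j=1,\dots,n$ and checking that the hypotheses of the theorem transfer. All the analytic work (the Fubini argument, the Cauchy integral computation \eqref{eq:basic-posi}, and the Lebesgue-dominated convergence for continuity down to $p=0$) has already been carried out at the level of Theorem~\ref{thm:QAM-posi-upper}, so nothing new is needed beyond verifying the hypotheses for the shifted variables.

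First I would check that $(Z_j)_{j=1}^n$ is an i.i.d.\ sequence of $\overline{\mathbb{H}}$-valued random variables. Since $X_j$ takes values in $\overline{\mathbb{H}}$ and $\alpha \in \overline{\mathbb{H}}$, and $\overline{\mathbb{H}}$ is closed under addition, $Z_j \in \overline{\mathbb{H}}$; the i.i.d.\ property is preserved by the common deterministic shift by $\alpha$. For integrability, the triangle inequality gives
\[
|Z_1| \le |X_1| + |\alpha|,
\]
so $Z_1 \in L^1$ follows from the hypothesis $X_1 \in L^1$ and $|\alpha| < \infty$.

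Next I would invoke Theorem~\ref{thm:QAM-posi-upper} applied to $(Z_1,\dots,Z_n)$. Its integrability statement gives $\bigl(\tfrac{1}{n}\sum_j Z_j^p\bigr)^{1/p} \in L^1$, and \eqref{eq:qam-eq-posi-upper} gives
\[
\left.\frac{\partial^{1/p}}{\partial t^{1/p}} E\!\left[\exp\!\left(-i\tfrac{t}{n} Z_1^p\right)\right]^n\right|_{t=0} = (-i)^{1/p}\, E\!\left[\left(\tfrac{1}{n}\sum_{j=1}^n Z_j^p\right)^{1/p}\right],
\]
which upon restoring $Z_j = X_j+\alpha$ is exactly the displayed identity. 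The continuity of $p \mapsto E\!\left[\bigl(\tfrac{1}{n}\sum_j Z_j^p\bigr)^{1/p}\right]$ on $[0,1]$ from Theorem~\ref{thm:QAM-posi-upper} then gives the final assertion.

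There is no substantive obstacle here; the corollary is essentially a repackaging of Theorem~\ref{thm:QAM-posi-upper}, with the only work being the elementary verification that the shifted variables $X_j+\alpha$ inherit both $\overline{\mathbb{H}}$-valuedness and the $L^1$ condition. The one place where a reader might pause is the continuity at the endpoint $p=0$, but that is already handled inside Theorem~\ref{thm:QAM-posi-upper} by the domination $\bigl|\bigl(\tfrac{1}{n}\sum_j Z_j^p\bigr)^{1/p}\bigr| \le \tfrac{1}{n}\sum_j |Z_j|$ together with the pointwise limit to the geometric mean, and no further argument is required at the corollary level.
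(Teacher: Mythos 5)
Your proposal is correct and matches the paper's (implicit) argument: the paper introduces the corollary with only ``We see that,'' signalling that it is a direct specialization of Theorem~\ref{thm:QAM-posi-upper} to $Z_j := X_j + \alpha$, which is exactly what you do. Your verifications that the shifted variables remain i.i.d., $\overline{\mathbb{H}}$-valued, and in $L^1$ are the right (and the only) things to check.

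One small imprecision: you write that the ``integrability statement'' of Theorem~\ref{thm:QAM-posi-upper} gives $\bigl(\tfrac1n\sum_j Z_j^p\bigr)^{1/p}\in L^1$, but the theorem's conclusion does not actually assert this; it is established inside the theorem's proof via the H\"older bound $\bigl|\bigl(\tfrac1n\sum_j Z_j^p\bigr)^{1/p}\bigr|\le\tfrac1n\sum_j|Z_j|$. You should either cite that bound directly or repeat the one-line H\"older argument, rather than attributing the $L^1$ conclusion to the theorem statement itself.
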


If $X_1 \notin L^1$, then, $\left( \frac{1}{n} \sum_{j=1}^{n} (X_j + \alpha)^p \right)^{1/p} \notin L^1$.
See \cite[Proposition 5.1 (ii)]{Akaoka2022}.

\section{Characterizations of distributions on $\mathbb R$}\label{sec:char}

Throught this section, we use $\lambda$ as a symbol of exponents of complex-valued random variables. 
We first consider the case that $\textup{Re}(\lambda)  < 0$. 
Recall that $U = i \mathbb{H} = \{\lambda \in \mathbb{C} | \textup{Re}(\lambda) < 0\}$. 

We consider the analyticity of the moment with respect to the exponent $\lambda$. 
Let $U_{M} := \{z \in U : \textup{Re}(z) > -M\}$ for $M > 0$.

\begin{Lem}\label{lem:hol-fractional-power}
Let $Z$ be a complex-valued random variable.
Assume that $E\left[\left|\textup{Im}(Z)\right|^{-M}\right] < +\infty$ and $P(Z \in \mathbb{R}) = 0$.
Then, the map $\lambda \mapsto E\left[Z^{\lambda}\right]$ is holomorphic on $U_{M}$ and furthermore,
$$\frac{d}{d\lambda} E[Z^{\lambda}] = E\left[ Z^{\lambda} \log Z\right].$$ 
\end{Lem}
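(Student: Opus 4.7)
The plan is to combine the pointwise estimate \eqref{eq:complex-power-abs} with Morera's theorem, and then extract the derivative formula via Cauchy's integral formula together with a Fubini swap. First I would verify that $E[Z^{\lambda}]$ is well-defined on $U_M$. For fixed $\lambda \in U_M$, since $P(Z \in \mathbb{R}) = 0$, estimate \eqref{eq:complex-power-abs} gives almost surely $|Z^{\lambda}| \le C(\lambda)\,|\textup{Im}(Z)|^{\textup{Re}(\lambda)}$. Splitting on $\{|\textup{Im}(Z)| \le 1\}$ and its complement, and using $-M < \textup{Re}(\lambda) < 0$, one obtains $|Z^{\lambda}| \le C(\lambda)\bigl(|\textup{Im}(Z)|^{-M} + 1\bigr) \in L^{1}$ by the hypothesis.

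Next I would apply Morera's theorem. For any closed triangle $\Delta \subset U_M$ there exist $\delta, K > 0$ with $-M + \delta \le \textup{Re}(\lambda) \le -\delta$ and $C(\lambda) \le K$ for all $\lambda \in \Delta$, so the previous bound yields a single $\lambda$-independent dominating function $K\bigl(|\textup{Im}(Z)|^{-M} + 1\bigr) \in L^1$. Fubini's theorem then permits the swap
$$\oint_{\partial\Delta} E[Z^{\lambda}]\,d\lambda \;=\; E\!\left[\oint_{\partial\Delta} Z^{\lambda}\,d\lambda\right].$$
Since $Z \ne 0$ almost surely and $\lambda \mapsto Z^{\lambda} = \exp(\lambda \log Z)$ is entire for any such realization, the inner integral vanishes by Cauchy's theorem, and Morera's theorem gives holomorphy of $\lambda \mapsto E[Z^{\lambda}]$ on $U_M$.

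For the derivative formula, fix $\lambda_0 \in U_M$ and a closed disk $\overline{D(\lambda_0, r)} \subset U_M$. Cauchy's formula for derivatives combined with the same Fubini argument (the dominating function from the previous paragraph applies uniformly on the compact circle $\partial D(\lambda_0, r)$, where $|\mu - \lambda_0|^{-2} = r^{-2}$) yields
$$\frac{d}{d\lambda} E[Z^{\lambda}]\bigg|_{\lambda = \lambda_0} \;=\; \frac{1}{2\pi i} \oint_{\partial D(\lambda_0, r)} \frac{E[Z^{\mu}]}{(\mu - \lambda_0)^2}\,d\mu \;=\; E\!\left[\frac{1}{2\pi i} \oint_{\partial D(\lambda_0, r)} \frac{Z^{\mu}}{(\mu - \lambda_0)^2}\,d\mu\right] \;=\; E\bigl[Z^{\lambda_0}\log Z\bigr],$$
the last equality being Cauchy's integral formula for the derivative applied to the entire function $\mu \mapsto Z^{\mu}$, whose $\mu$-derivative at $\lambda_0$ is $Z^{\lambda_0}\log Z$. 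The crux of the argument, on which every step hinges, is obtaining the locally uniform integrable majorant for $|Z^{\lambda}|$ from \eqref{eq:complex-power-abs}; once that is in hand, the remaining complex-analytic machinery is standard.
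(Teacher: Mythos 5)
Your proposal is correct and takes essentially the same route as the paper's proof: the pointwise bound \eqref{eq:complex-power-abs} gives a locally uniform integrable majorant, Fubini plus Cauchy's theorem kills the contour integrals, Morera yields holomorphy, and the derivative formula follows from Cauchy's integral formula with another Fubini swap. The only point worth stating explicitly is that Morera's theorem also needs the continuity of $\lambda \mapsto E\left[Z^{\lambda}\right]$ on $U_M$, which follows at once from your majorant via dominated convergence.
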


\begin{proof}
Let $C$ be a Jordan curve in $U_{M}$. 
By \eqref{eq:complex-power-abs},
$E\left[Z^{\lambda}\right]$ is continuous on $U_{M}$ and $E\left[ \sup_{\lambda \in C} |Z^{\lambda}|\right] \le E\left[\left|\textup{Im}(Z)\right|^{-M}\right] < +\infty$.
By Fubini's theorem and the Cauchy integral theorem,
$$\int_C E[Z^{\lambda}] d\lambda = E\left[ \int_C Z^{\lambda} d\lambda \right] = 0.$$ 
By Morera's theorem,
$E\left[Z^{\lambda}\right]$ is holomorphic on $U_{M}$.

By the Cauchy integral formula and Fubini's theorem,
we see that
\[ \frac{d}{d\lambda} E[Z^{\lambda}] = \frac{1}{2\pi i} \int_C \frac{E[Z^{\zeta}]}{(\zeta - \lambda)^2} d\zeta =  E\left[ \frac{1}{2\pi i} \int_C \frac{Z^{\zeta}}{(\zeta - \lambda)^2} d\zeta \right] =  E\left[ Z^{\lambda} \log Z\right]. \]
where we let $C$ be a circle with center $\lambda$ contained in $U_M$.
\end{proof}

There are several different proofs of this lemma.
One alternative way is to use the integral expression of $E\left[Z^{\lambda}\right]$ obtained from Lemma \ref{lem:negative-basic-1}.
By \eqref{eq:complex-power-abs}, we can also show the following assertion in the same manner as in the above lemma. 

\begin{Lem}\label{lem:hol-fractional-auxiliary}
Let $\lambda \in U$.
Let $Z$ be a $\overline{\mathbb{H}}$-valued random variable.
Then, the map $\alpha \mapsto E\left[(Z+\alpha)^{\lambda}\right]$ is holomorphic on $\mathbb H$  and furthermore,
$$\frac{d}{d\alpha} E\left[(Z+\alpha)^{\lambda} \right] = \lambda E\left[ (Z+\alpha)^{\lambda-1}\right].$$ 
\end{Lem}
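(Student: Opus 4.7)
The plan is to follow the same template as the proof of Lemma \ref{lem:hol-fractional-power}, swapping the roles of the exponent and the shift: use Morera's theorem via Fubini's theorem, and obtain the derivative formula from the Cauchy integral formula. The estimate \eqref{eq:complex-power-abs} again supplies the dominating function, but this time applied to the imaginary part of the shift rather than of $Z$.

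First, I would fix a Jordan curve $C \subset \mathbb H$ and set $\delta := \inf_{\alpha \in C \cup \text{int}(C)} \textup{Im}(\alpha) > 0$. For any $z \in \overline{\mathbb H}$ and any $\alpha$ on or inside $C$, the point $z + \alpha$ lies in $\mathbb H$ (in particular, it avoids the branch cut $(-\infty,0]$), and $\textup{Im}(z+\alpha) \ge \delta$. Since $\textup{Re}(\lambda) < 0$, the estimate \eqref{eq:complex-power-abs} gives the uniform bound
$$|(z+\alpha)^{\lambda}| \le C(\lambda) (\textup{Im}(z) + \textup{Im}(\alpha))^{\textup{Re}(\lambda)} \le C(\lambda)\, \delta^{\textup{Re}(\lambda)},$$
independent of $z \in \overline{\mathbb H}$ and $\alpha$ on or inside $C$. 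By dominated convergence this yields continuity of $\alpha \mapsto E[(Z+\alpha)^{\lambda}]$ on $\mathbb H$.

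Second, for each fixed $z \in \overline{\mathbb H}$, the map $\alpha \mapsto (z+\alpha)^{\lambda}$ is holomorphic on $\mathbb H$, so $\oint_C (z+\alpha)^{\lambda}\, d\alpha = 0$ by the Cauchy integral theorem. The uniform bound above allows Fubini's theorem, and
$$\oint_C E\left[(Z+\alpha)^{\lambda}\right] d\alpha = E\left[\oint_C (Z+\alpha)^{\lambda} d\alpha\right] = 0.$$
Morera's theorem then gives that $\alpha \mapsto E[(Z+\alpha)^{\lambda}]$ is holomorphic on $\mathbb H$.

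Finally, for the derivative formula, choose $C$ to be a small circle of radius $r$ centered at $\alpha \in \mathbb H$, contained in $\mathbb H$. By the Cauchy integral formula, the same uniform bound, and Fubini's theorem,
$$\frac{d}{d\alpha} E\left[(Z+\alpha)^{\lambda}\right] = \frac{1}{2\pi i} \oint_C \frac{E[(Z+\zeta)^{\lambda}]}{(\zeta - \alpha)^2}\, d\zeta = E\left[\frac{1}{2\pi i} \oint_C \frac{(Z+\zeta)^{\lambda}}{(\zeta - \alpha)^2}\, d\zeta\right] = \lambda\, E\left[(Z+\alpha)^{\lambda - 1}\right],$$
where the last equality applies the ordinary Cauchy integral formula to the pointwise holomorphic function $\zeta \mapsto (Z+\zeta)^{\lambda}$. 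There is no real obstacle here: the only subtlety is verifying that the uniform bound from \eqref{eq:complex-power-abs} is genuinely uniform on compact subsets of $\mathbb H$, which works because $\textup{Re}(\lambda) < 0$ and $t \mapsto t^{\textup{Re}(\lambda)}$ is decreasing in $t > 0$.
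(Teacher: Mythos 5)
Your proof is correct and follows exactly the route the paper intends: the paper proves this lemma ``in the same manner as'' Lemma \ref{lem:hol-fractional-power}, i.e.\ via the uniform bound from \eqref{eq:complex-power-abs} (here applied to $\textup{Im}(z+\alpha)\ge\textup{Im}(\alpha)$), Fubini plus Morera for holomorphy, and the Cauchy integral formula plus Fubini for the derivative identity. Nothing is missing.
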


Let $\mathcal{P}(\mathbb R)$ be the set of Borel probability measures on $\mathbb{R}$.
Let
$$F_{\eta}(\alpha, \lambda) := \int_{\mathbb R} (x+\alpha)^{\lambda} \eta(dx), \ \alpha, \lambda \in \mathbb{C}, \  \eta \in \mathcal{P}(\mathbb R),$$ 
if the integral exists.
This is a function of two variates.
Hereafter we {\it fix} either of the two variables.
Let
$$\mathcal{F}_{(\cdot, \lambda)} := \left\{F_{\eta} (\cdot, \lambda) | \eta \in \mathcal{P}(\mathbb{R}) \right\}$$ 
and 
$$\mathcal{F}_{(\alpha, \cdot)} := \left\{F_{\eta} (\alpha, \cdot) | \eta \in \mathcal{P}(\mathbb{R}) \right\}$$  
for $\lambda, \alpha \in \mathbb{C}$.
Let the characteristic function of $\eta \in \mathcal{P}(\mathbb R)$ be
$$\varphi_{\eta} (t) := \int_{\mathbb R} \exp(itx) \eta(dx), \ t \in \mathbb{R}.$$ 

We remark that for every $P \in \mathcal{P}(\mathbb{R})$, $\alpha \in \mathbb{H}$, and $\lambda \in U$,
the integral $F_{P}(\alpha, \lambda)$ is well-defined.
By Lemmas \ref{lem:hol-fractional-power} and  \ref{lem:hol-fractional-auxiliary},
all elements of $\mathcal{F}_{(\cdot, \lambda)}$ and $\mathcal{F}_{(\alpha, \cdot)}$ are holomorphic functions on $\mathbb{H}$ and $U$, respectively.

Let $D$ be a set and $\mathcal{F}$ be a class of complex-valued functions on $D$.
Then, we say that a subset $S$ of $D$ is a {\it determining set} of $(D, \mathcal{F})$ if it holds the property that if $f, g \in \mathcal{F}$ and $f(x) = g(x)$ for every $x \in S$ then $f(x) = g(x)$ for every $x \in D$.

\begin{Exa}\label{exa:determine}
(i) Let $D$ be a domain and $\mathcal{F}$ be the set of holomorphic functions on $D$.
If $S$ is a subset of $D$ which has an accumulating point in $D$, then, $S$ is a determining set of $(D, \mathcal{F})$, by the identity theorem for holomorphic functions. \\
(ii) Let $\mathbb{D} := \{z \in \mathbb{C} : |z| < 1\}$.
For $a > 0$, let $\mathbb{H}_a := \{z \in \mathbb{H} : \textup{Im}(z) > a\}$ and $\varphi_a(z) := \dfrac{z-(a+1)i}{z-(a-1)i}, \ z \in \mathbb{H}_a$.
The map $\varphi_a : \mathbb{H}_a \to \mathbb{D}$ is bi-holomorphic.
Let $(z_n)_{n \ge 1}$ be a sequence in $\mathbb{H}_a$ such that $\sum_{n=1}^{\infty} (1 - |\varphi_a (z_n)|) = +\infty$. 
This means that the {\it Blaschke condition} fails for $(\varphi_a (z_n))_{n}$.
For $\lambda \in (-\infty, 0)$ and $P \in \mathcal{P}(\mathbb{R})$,
the map $w \mapsto F_P \left( \varphi_a^{-1}(w), \lambda \right)$ is holomorphic and {\it bounded} on $\mathbb{D}$, since $a > 0$.
By \cite[Theorem 15.23]{Rudin1987},
the set $\{z_n\}_n$ is a determining set of $\left(\mathbb{H}_a, \mathcal{F}_{(\cdot, \lambda)} \right)$.

For example, if $|\varphi_{1} (z_n)| = 1 - 1/n$ for each $n$,
then, $\{z_n\}_n$ is an unbounded determining set of $\left(\mathbb{H}_{1}, \mathcal{F}_{(\cdot, \lambda)} \right)$. 
Since $\mathbb{H}_{1} \subset \mathbb{H}$ and $\mathcal{F}_{(\cdot, \lambda)}$ is a class of holomorphic functions on $\mathbb{H}$, by the identity theorem for holomorphic functions,
$\{z_n\}_n$ is an unbounded determining set of $\left(\mathbb{H}, \mathcal{F}_{(\cdot, \lambda)} \right)$. \\ 
(iii) Let $(\lambda_n)_{n \ge 1}$ be a sequence in $U$ such that $\sum_{n \ge 1} 1/(-\textup{Re}(\lambda_n)) = +\infty$ and $\sup_{n \ge 1} |\textup{Im}(\lambda_n)| < +\infty$. 
Then, by following the proof\footnote{The Blaschke condition is crucial in it.} of M\"untz-Szasz theorem in \cite[Theorem 15.26]{Rudin1987},
we see that $\{\lambda_n\}_{n \ge 1}$ is a determining set of $(U, \mathcal{F}_{(\alpha, \cdot)})$ for every $\alpha \in \mathbb{H}$.
\end{Exa}

Our motivation for introducing the notion of determining sets is due to Example \ref{exa:determine} (ii), in which the determining set is a {\it divergent} sequence in $\mathbb{H}$. 
We consider a characterization of distributions.

\begin{Thm}\label{thm:char-R-nega}
Let $\mu, \nu \in \mathcal{P}(\mathbb R)$.
Then,
$\mu = \nu$ if either of the following two conditions holds: \\
(i) There exist a point $\lambda \in U$ and a determining set $D_{\lambda}$ of $(\mathbb{H}, \mathcal{F}_{(\cdot, \lambda)})$ such that $F_{\mu} (\alpha,\lambda) = F_{\nu} (\alpha,\lambda)$ for every $\alpha \in D_{\lambda}$.\\
(ii) There exist a point $\alpha \in \mathbb{H}$ and a determining set $D_{\alpha}$ of $(U, \mathcal{F}_{(\alpha, \cdot)})$ such that $F_{\mu}(\alpha, \lambda) = F_{\nu}(\alpha,\lambda)$ for every $\lambda \in D_{\alpha}$.
\end{Thm}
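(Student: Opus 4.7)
The plan is to reduce both parts to the integral representation provided by Proposition \ref{prop:integral-neg-fractional} and then invoke the uniqueness of the Fourier (equivalently, Mellin) transform to conclude that $\varphi_\mu = \varphi_\nu$, whence $\mu = \nu$ by the classical injectivity of the characteristic function. The overall structure is the same for the two parts: first use the holomorphy lemmas \ref{lem:hol-fractional-power} and \ref{lem:hol-fractional-auxiliary} to see that the equality of $F_\mu$ and $F_\nu$ on the determining set propagates to the full domain ($\mathbb{H}$ or $U$), then convert the resulting family of integral equations into a Fourier-vanishing statement.

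For part (i), I would first note that by Lemma \ref{lem:hol-fractional-auxiliary} the maps $\alpha \mapsto F_\mu(\alpha,\lambda)$ and $\alpha \mapsto F_\nu(\alpha,\lambda)$ both lie in $\mathcal{F}_{(\cdot,\lambda)}$, a class of holomorphic functions on $\mathbb{H}$. The determining-set property then promotes the hypothesis $F_\mu(\alpha,\lambda) = F_\nu(\alpha,\lambda)$ from $D_\lambda$ to every $\alpha \in \mathbb{H}$. Substituting the integral expression from Proposition \ref{prop:integral-neg-fractional} and writing $\alpha = \sigma + i\tau$ with $\tau > 0$ fixed, the identity becomes
\[
\int_0^\infty \bigl[ t^{-\lambda-1} e^{-\tau t} (\varphi_\mu(t) - \varphi_\nu(t)) \bigr]\, e^{i\sigma t}\, dt = 0 \quad \text{for every } \sigma \in \mathbb{R}.
\]
The bracketed function lies in $L^1(0,\infty)$ (its modulus is bounded by $2\, t^{-\textup{Re}(\lambda)-1} e^{-\tau t}$, integrable at $0$ since $\textup{Re}(\lambda) < 0$ and at $\infty$ by the exponential), so $L^1$-injectivity of the Fourier transform forces it to vanish a.e. Since $t^{-\lambda-1} e^{-\tau t} \neq 0$ on $(0,\infty)$, I get $\varphi_\mu = \varphi_\nu$ a.e.\ on $(0,\infty)$, hence everywhere by continuity, and finally on all of $\mathbb{R}$ via $\varphi(0)=1$ and $\varphi(-t) = \overline{\varphi(t)}$.

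For part (ii), I would apply Lemma \ref{lem:hol-fractional-power} with $Z = X + \alpha$: since $\textup{Im}(Z) = \textup{Im}(\alpha) > 0$ is a positive constant, the moment condition $E[|\textup{Im}(Z)|^{-M}] < \infty$ holds for every $M > 0$, so $\lambda \mapsto F_\mu(\alpha,\lambda)$ and $\lambda \mapsto F_\nu(\alpha,\lambda)$ are holomorphic on $U$ and belong to $\mathcal{F}_{(\alpha,\cdot)}$. The determining-set property upgrades the equality to all $\lambda \in U$, and Proposition \ref{prop:integral-neg-fractional} yields
\[
\int_0^\infty t^{-\lambda-1} h(t)\, dt = 0 \quad \text{for every } \lambda \in U,
\]
where $h(t) := (\varphi_\mu(t) - \varphi_\nu(t)) e^{i\alpha t}$. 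Substituting $t = e^u$ and $\lambda = -s + i\tau$ with $s > 0$, this becomes the vanishing of the Fourier transform of $u \mapsto e^{su} h(e^u)$ at every $\tau \in \mathbb{R}$. The bound $|h(t)| \le 2 e^{-\textup{Im}(\alpha) t}$ yields exponential decay at $-\infty$ and super-exponential decay at $+\infty$, placing this function in $L^1(\mathbb{R})$. Fourier injectivity then produces $h \equiv 0$ on $(0,\infty)$, so $\varphi_\mu = \varphi_\nu$ on $(0,\infty)$, and the conclusion follows as in (i).

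The main technical point is confirming the $L^1$ decay so that the Fourier uniqueness argument is legitimate; in both parts this is guaranteed by the uniform bound $|\varphi_\mu - \varphi_\nu| \le 2$ combined with the exponential factor $e^{-\tau t}$ or $e^{-\textup{Im}(\alpha) t}$, which is why no integrability hypothesis on $\mu$ or $\nu$ is required.
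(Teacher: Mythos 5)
Your argument is correct and follows essentially the same route as the paper: propagate the identity from the determining set to all of $\mathbb{H}$ (resp.\ $U$) via Lemmas \ref{lem:hol-fractional-auxiliary} and \ref{lem:hol-fractional-power}, feed it into the integral representation of Proposition \ref{prop:integral-neg-fractional}, and conclude by injectivity of a classical integral transform. The only cosmetic differences are that in (i) the paper restricts $\alpha$ to the imaginary axis and invokes Laplace-transform uniqueness where you restrict to a horizontal line and use Fourier uniqueness, and in (ii) you make explicit, via the substitution $t=e^{u}$ and the $L^{1}$ bound $|h(t)|\le 2e^{-\textup{Im}(\alpha)t}$, the Mellin-uniqueness step that the paper merely cites.
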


We do not need any moment assumptions for $\mu$ and $\nu$.
It is, for example, applicable to the Cauchy distributions.
This is an extension of \cite[Theorem 2.1]{Okamura2020},  which is specific to the Cauchy distribution.
The following proof is completely different from the proof of \cite[Theorem 2.1]{Okamura2020}, which depends on the Riesz-Markov-Kakutani theorem.

\begin{proof}
Assume that (i) holds.
By the assumption and Lemma \ref{lem:hol-fractional-auxiliary},
$F_{\mu}(\alpha, \lambda) = F_{\nu}(\alpha, \lambda)$ for  every  $\alpha \in \mathbb H$, in particular on the imaginary axis in $\mathbb H$.
By Proposition \ref{prop:integral-neg-fractional}, 
$$\int_0^{\infty}  t^{-\lambda-1} \varphi_{\mu} (t) \exp(-st) dt = \int_0^{\infty}  t^{-\lambda-1} \varphi_{\nu} (t) \exp(-s t) dt, \ s > 0.$$ 
By an inversion formula for the Laplace transform,
$\varphi_{\mu} (t) = \varphi_{\nu} (t)$ for every $t > 0$.
Hence,
$\varphi_{\mu} (t) = \varphi_{\nu} (t)$ for every $t \in \mathbb R$.
By the L\'evy inversion formula, $\mu = \nu$. 

Assume that (ii) holds.
The proof is identical with the above one, except for using the inversion formula for the Mellin transform.
By the assumption and Lemma \ref{lem:hol-fractional-power},
$F_{\mu}(\alpha, \lambda) = F_{\nu}(\alpha, \lambda)$, for  every $\lambda \in (-\infty, 0)$.
By Proposition \ref{prop:integral-neg-fractional},
$$\int_0^{\infty}  t^{-\lambda-1} \varphi_{\mu} (t) \exp(i \alpha t) dt = \int_0^{\infty}  t^{-\lambda-1} \varphi_{\nu} (t) \exp(i \alpha t) dt, \ \lambda \in (-\infty, 0).$$ 
We remark that
$$\left|(\varphi_{\mu} (t) - \varphi_{\nu} (t)) \exp(i \alpha t)\right| \le 2\exp(- t \textup{Im}(\alpha)).$$  
By an inversion formula for the Mellin transform,
$\varphi_{\mu} (t) = \varphi_{\nu} (t)$ for every $t > 0$.
Hence, $\mu = \nu$.
\end{proof}

It is natural to regard $F_{\eta}(\alpha, \lambda)$ as a two-variate function on $\mathbb{H} \times U$.
By Hartogs' theorem, $F_{\eta}$ is a holomorphic function on $\mathbb{H} \times U$.
For holomorphic functions on $\mathbb{H} \times U$, every non-empty open subset is a determining set, however, a subset having an accumulating point in $\mathbb{H} \times U$ may not be a determining set.
The case of non-empty open subsets is attributed to the univariate case.
See \cite{Rudin2008} for discussions of determining sets.
We do not discuss this issue further.

We second consider the case that $\textup{Re}(\lambda) > 0$. 
Recall that $V = \{\lambda \in \mathbb{C} | \textup{Re}(\lambda) > 0\}$. 
Let $V_M := \{\lambda \in V | \textup{Re}(\lambda) < M\}$ for $M > 0$.
We let $z^{\lambda} \log z := 0$ if $z = 0$ and $\lambda \in V$.
We can also show the following in the same manner as in the proof of Lemma \ref{lem:hol-fractional-power}.

\begin{Lem}\label{lem:hol-frac-power-posi}
Let $Z$ be a complex-valued random variable.
Assume that $E\left[|Z|^M \right] < +\infty$.
Then, the map $\lambda \mapsto E\left[Z^{\lambda}\right]$ is holomorphic on $V_M$ and furthermore,
$$\frac{d}{d\lambda} E\left[Z^{\lambda}\right] = E\left[ Z^{\lambda} \log Z\right].$$  
\end{Lem}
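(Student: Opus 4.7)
The plan is to mirror the proof of Lemma \ref{lem:hol-fractional-power}, with the single change that the dominating bound on $|Z^\lambda|$ must be adapted to exponents in the right half-plane rather than the left. I would first fix a Jordan curve $C \subset V_M$; since $C$ is compact, there exist constants $M' < M$ and $K > 0$ with $\textup{Re}(\lambda) \le M'$ and $|\textup{Im}(\lambda)| \le K$ for all $\lambda \in C$.

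For the uniform domination, observe that for $z \ne 0$ one has $|z^\lambda| = |z|^{\textup{Re}(\lambda)} \exp(-\textup{Im}(\lambda) \arg z) \le e^{\pi K}(1 + |z|^M)$ on $C$, since $|z|^{\textup{Re}(\lambda)} \le \max(1, |z|^{M'}) \le 1 + |z|^M$ for $0 < \textup{Re}(\lambda) \le M' < M$. The case $z = 0$ is handled by the convention $0^\lambda = 0$. The hypothesis $E[|Z|^M] < +\infty$ then gives $E\left[ \sup_{\lambda \in C} |Z^\lambda| \right] < +\infty$, which supplies the integrability needed to apply Fubini along $C$.

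From here the argument closes just as in Lemma \ref{lem:hol-fractional-power}. Continuity of $\lambda \mapsto E[Z^\lambda]$ on $V_M$ follows from the dominated convergence theorem with the bound above. To obtain holomorphy, I would use Fubini to exchange $E$ and the contour integral $\int_C$ and invoke the Cauchy integral theorem for the pointwise entire function $\lambda \mapsto Z^\lambda = \exp(\lambda \log Z)$ (fixed $Z \ne 0$) to conclude $\int_C E[Z^\lambda]\, d\lambda = 0$; Morera's theorem then yields holomorphy on $V_M$. For the derivative formula, take $C$ to be a small circle around $\lambda$ contained in $V_M$ and apply the Cauchy integral formula with the $(\zeta-\lambda)^{-2}$ kernel together with Fubini (the bound $\sup_{\zeta \in C}|Z^\zeta|/\textup{dist}(\lambda, C)^2$ is again integrable) to obtain
\[
\frac{d}{d\lambda} E[Z^\lambda] = E\!\left[ \frac{1}{2\pi i} \int_C \frac{Z^\zeta}{(\zeta-\lambda)^2} d\zeta \right] = E[Z^\lambda \log Z],
\]
where the convention $z^\lambda \log z := 0$ at $z=0$ ensures the right-hand side is well-defined.

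The only delicate point is establishing this uniform domination on compact subsets of $V_M$. The asymmetry with the negative-order case is that $|z|^{\textup{Re}(\lambda)}$ is large when $|z|$ is large rather than when $|\textup{Im}(z)|$ is small, so the integrability hypothesis naturally shifts from a bound on $E[|\textup{Im}(Z)|^{-M}]$ to the moment bound $E[|Z|^M] < +\infty$; the structure of the Morera-plus-Cauchy argument is otherwise unchanged.
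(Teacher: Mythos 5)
Your proof is correct and follows exactly the route the paper intends: the paper's own ``proof'' of this lemma is just the remark that it is shown in the same manner as Lemma~\ref{lem:hol-fractional-power}, and you reproduce that Morera-plus-Cauchy argument with the only necessary change, namely replacing the bound $|z^\lambda| \le C(\lambda)|\textup{Im}(z)|^{\textup{Re}(\lambda)}$ by a dominating bound of the form $|z^\lambda| \le e^{\pi K}(1+|z|^M)$ uniformly over a compact curve in $V_M$, so that $E[|Z|^M] < +\infty$ supplies the needed integrability.
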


There are some differences from Lemma \ref{lem:hol-fractional-power}.
In the above assertion, we need to assume the integrability condition for $Z$.
However, on the other hand, $Z$ can take real numbers with positive probability.

We also have the following assertion which corresponds to Lemma \ref{lem:hol-fractional-auxiliary}.
\begin{Lem}\label{lem:hol-frac-auxiliary-posi}
Let $\lambda \in V$.
Let $Z$ be a $\overline{\mathbb{H}}$-valued random variable  such that $E\left[|Z|^{\textup{Re}(\lambda)} \right] < +\infty$.
Then, the map $\alpha \mapsto E\left[(Z+\alpha)^{\lambda}\right]$ is holomorphic on $\mathbb H$ and furthermore,
\begin{equation}\label{eq:posi-plus-deri}
\frac{d}{d\alpha} E\left[(Z+\alpha)^{\lambda} \right] = \lambda E\left[ (Z+\alpha)^{\lambda-1}\right].
\end{equation}
\end{Lem}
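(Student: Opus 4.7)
The plan is to mirror the proof of Lemma \ref{lem:hol-fractional-auxiliary} closely, using Morera's theorem to establish holomorphicity and the Cauchy integral formula to derive \eqref{eq:posi-plus-deri}, but replacing the role of the bound \eqref{eq:complex-power-abs} by a dominating function tailored to $\textup{Re}(\lambda)>0$ under the hypothesis $E[|Z|^{\textup{Re}(\lambda)}]<+\infty$.

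First I would establish the key domination. Since $Z \in \overline{\mathbb{H}}$ and $\alpha \in \mathbb{H}$, we have $Z+\alpha \in \mathbb{H}$; in particular $Z+\alpha \ne 0$ and $\arg(Z+\alpha) \in (0,\pi)$. Writing $|(Z+\alpha)^{\lambda}| = |Z+\alpha|^{\textup{Re}(\lambda)}\exp(-\textup{Im}(\lambda)\arg(Z+\alpha))$ yields
\[ |(Z+\alpha)^{\lambda}| \le e^{\pi|\textup{Im}(\lambda)|}(|Z|+|\alpha|)^{\textup{Re}(\lambda)}. \]
Combining this with the elementary inequality $(a+b)^s \le 2^{\max(s-1,0)}(a^s+b^s)$ gives, for any compact subset $K \subset \mathbb{H}$, a bound of the form $\sup_{\alpha \in K}|(Z+\alpha)^{\lambda}| \le C_K(1+|Z|^{\textup{Re}(\lambda)})$, which is integrable by hypothesis.

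With this in hand, Morera's theorem applies verbatim as in the earlier lemma: for any Jordan curve $C \subset \mathbb{H}$, Fubini (justified by the above domination) and Cauchy's theorem applied to the holomorphic map $\alpha \mapsto (Z+\alpha)^{\lambda}$ yield
\[ \int_C E[(Z+\alpha)^{\lambda}]\,d\alpha = E\left[\int_C (Z+\alpha)^{\lambda}\,d\alpha\right] = 0, \]
while continuity on $\mathbb{H}$ follows from dominated convergence with the same majorant. For \eqref{eq:posi-plus-deri}, a small circle $C_r \subset \mathbb{H}$ centered at $\alpha$ together with the Cauchy integral formula and Fubini give
\[ \frac{d}{d\alpha}E[(Z+\alpha)^{\lambda}] = E\left[\frac{1}{2\pi i}\int_{C_r}\frac{(Z+\beta)^{\lambda}}{(\beta-\alpha)^2}\,d\beta\right] = \lambda\,E[(Z+\alpha)^{\lambda-1}], \]
since $\beta \mapsto (Z+\beta)^{\lambda}$ is holomorphic on $\mathbb{H}$ with pointwise derivative $\lambda(Z+\beta)^{\lambda-1}$. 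The main (mild) subtlety is verifying the uniform bound across both regimes $\textup{Re}(\lambda) \in (0,1)$ and $\textup{Re}(\lambda) \ge 1$; for the right-hand side when $\textup{Re}(\lambda)<1$ one must additionally exploit $|Z+\alpha|\ge \textup{Im}(\alpha)>0$ to control $|(Z+\alpha)^{\lambda-1}|$ uniformly in $Z$ as $\alpha$ ranges over a compact subset of $\mathbb{H}$.
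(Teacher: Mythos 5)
Your proposal is correct and follows essentially the same route the paper intends for this lemma (the Morera--Fubini--Cauchy integral formula argument of Lemma \ref{lem:hol-fractional-power} and Lemma \ref{lem:hol-fractional-auxiliary}), with the bound \eqref{eq:complex-power-abs} replaced by the domination $\left|(Z+\alpha)^{\lambda}\right| \le e^{\pi|\textup{Im}(\lambda)|}\left(|Z|+|\alpha|\right)^{\textup{Re}(\lambda)} \le C_K\left(1+|Z|^{\textup{Re}(\lambda)}\right)$, which is exactly what the moment hypothesis is for. Your closing remark handling $(Z+\alpha)^{\lambda-1}$ via $|Z+\alpha| \ge \textup{Im}(\alpha)$ when $\textup{Re}(\lambda)<1$ is the right way to see that the right-hand side of \eqref{eq:posi-plus-deri} is well defined.
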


The following results deal with the positive moment case.
We assume a moment condition for distributions.

\begin{Thm}\label{thm:char-R-posi}
Let $M > 0$.
Let $\mu, \nu \in \mathcal{P}(\mathbb R)$ such that 
$$\int_{\mathbb{R}} |x|^M \mu(dx) + \int_{\mathbb{R}} |x|^M \nu(dx) < +\infty.$$
Then,
$\mu = \nu$ if either of the following two conditions holds: \\
(i) There exist a point $\lambda \in V_M$ and a determining set $D_{\lambda}$ of $\left(\mathbb{H}, \mathcal{F}_{(\cdot,\lambda)} \right)$ such that $\textup{Re}(\lambda) \notin \mathbb{N}$ and $F_{\mu} (\alpha,\lambda) = F_{\nu} (\alpha,\lambda)$ for every $\alpha \in D_{\lambda}$.\\
(ii) There exist a point $\alpha \in \overline{\mathbb{H}}$ and a determining set $D_{\alpha}$ of $\left(V_M, \mathcal{F}_{(\alpha,\cdot)}\right)$such that $F_{\mu}(\alpha, \lambda) = F_{\nu}(\alpha,\lambda)$ for every $\lambda \in D_{\alpha}$.
\end{Thm}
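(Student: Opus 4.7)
The plan mirrors the two-pronged structure of Theorem \ref{thm:char-R-nega}: part (i) will be reduced to the negative-order case already proved, and part (ii) will be handled by a direct Mellin inversion that parallels the proof of Theorem \ref{thm:char-R-nega}(ii).

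For part (i), the first step is to invoke Lemma \ref{lem:hol-frac-auxiliary-posi} to promote the hypothesis from $D_\lambda$ to all of $\mathbb{H}$: the map $\alpha \mapsto F_\mu(\alpha,\lambda) - F_\nu(\alpha,\lambda)$ is holomorphic on $\mathbb{H}$ and vanishes on the determining set $D_\lambda$, hence identically on $\mathbb{H}$. Writing $k := \lfloor \textup{Re}(\lambda) \rfloor$, I then differentiate $k+1$ times in $\alpha$, iterating \eqref{eq:posi-plus-deri} at each step (each intermediate exponent $\lambda-j$ for $j=0,1,\dots,k$ has real part in $(0,M)$, so the required integrability is supplied by the $M$-th moment assumption), to obtain
\[ \lambda(\lambda-1)\cdots(\lambda-k)\bigl(F_\mu(\alpha,\lambda-k-1)-F_\nu(\alpha,\lambda-k-1)\bigr)=0, \qquad \alpha\in\mathbb{H}. \]
Because $\textup{Re}(\lambda)>0$ is not a natural number, the prefactor is nonzero, and so $F_\mu(\alpha,\lambda-k-1)=F_\nu(\alpha,\lambda-k-1)$ on $\mathbb{H}$. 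Since $\lambda-k-1\in U$, Theorem \ref{thm:char-R-nega}(i), applied with the trivial determining set $\mathbb{H}$, yields $\mu=\nu$.

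For part (ii), I first use Lemma \ref{lem:hol-frac-power-posi} to promote the equality from $D_\alpha$ to all $\lambda\in V_M$. Restricting to $\lambda$ in the nonempty open strip $\{0<\textup{Re}(\lambda)<\min(1,M)\}\subset V_M$, Proposition \ref{prop:integral-posi-fractional} (with $k=0$, $\delta=\lambda$) applied separately to $\mu$ and $\nu$ and subtracted gives
\[ \int_0^\infty u^{-1-\lambda}(\varphi_\mu(u)-\varphi_\nu(u))\exp(iu\alpha)\,du=0. \]
This is a vanishing Mellin-type transform in $\lambda$ of the continuous function $g(u):=(\varphi_\mu(u)-\varphi_\nu(u))\exp(iu\alpha)$, and an inversion formula for the Mellin transform---exactly as at the end of the proof of Theorem \ref{thm:char-R-nega}(ii)---forces $g\equiv 0$. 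Hence $\varphi_\mu\equiv\varphi_\nu$ on $(0,\infty)$ and therefore on $\mathbb{R}$, and L\'evy inversion gives $\mu=\nu$.

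The main obstacle I anticipate is the Mellin inversion in part (ii) when $\alpha\in\mathbb{R}$ (as opposed to $\alpha\in\mathbb{H}$): the exponential-decay bound $|(\varphi_\mu-\varphi_\nu)(u)\exp(iu\alpha)|\le 2\exp(-u\,\textup{Im}(\alpha))$ used at the corresponding step in Theorem \ref{thm:char-R-nega}(ii) degenerates. Nevertheless, the convergence of each of the two underlying integrals from Proposition \ref{prop:integral-posi-fractional} is still guaranteed by the $M$-th moment assumption, so the Mellin transform of $g$ is holomorphic on the strip $0<\textup{Re}(\lambda)<\min(1,M)$ and vanishes there; injectivity of the Mellin transform on this strip then forces $g\equiv 0$, completing the argument.
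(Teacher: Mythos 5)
Your proof is correct, and part (ii) runs essentially parallel to the paper's argument (promote the equality to all of $V_M$ via Lemma \ref{lem:hol-frac-power-posi}, apply Proposition \ref{prop:integral-posi-fractional} with $k=0$, and invoke Mellin inversion). Your care about the case $\alpha\in\mathbb{R}$ is a nice added touch: the observation that the integral still converges near infinity because $\textup{Re}(\lambda)>0$, and near zero because $\varphi_\mu-\varphi_\nu$ vanishes with H\"older rate $\min(1,M)$, is exactly what makes the Mellin argument go through without the exponential damping available when $\alpha\in\mathbb{H}$.

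For part (i) you take a genuinely different, and arguably cleaner, route. The paper differentiates $k$ times in $\alpha$ to drop the exponent to $\delta:=\lambda-k$ with $0<\textup{Re}(\delta)<1$, then applies Proposition \ref{prop:integral-posi-fractional}, differentiates once more \emph{inside the resulting integral} to kill the constant term, restricts $\alpha$ to the imaginary axis, and finishes by Laplace inversion. You instead differentiate $k+1$ times in $\alpha$ (the $(k+1)$-st application of \eqref{eq:posi-plus-deri} is legitimate because $\lambda-k\in V$ with $E[|X|^{\textup{Re}(\lambda-k)}]<\infty$), land on the exponent $\lambda-k-1\in U$, and then simply invoke Theorem \ref{thm:char-R-nega}(i) with the trivial determining set $\mathbb{H}$. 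Unwinding Theorem \ref{thm:char-R-nega}(i)'s proof shows both routes arrive at the same Laplace-transform identity, but your version gains modularity: it reuses the already-established negative-order characterization rather than re-deriving the Laplace step with the positive-order integral representation. One should note that the nonvanishing of the prefactor $\lambda(\lambda-1)\cdots(\lambda-k)$, which you correctly invoke, is exactly where the hypothesis $\textup{Re}(\lambda)\notin\mathbb{N}$ (together with $\textup{Re}(\lambda)>0$) is used, mirroring the role this hypothesis plays in the paper's guarantee that $0<\textup{Re}(\delta)<1$.
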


This is an extension of \cite[Theorem 3.1]{Okamura2020},  which is specific to the Cauchy distribution.
By Lemmas \ref{lem:hol-frac-power-posi} and \ref{lem:hol-frac-auxiliary-posi},
we can apply Example \ref{exa:determine} as examples of the determining sets.
The following proof is similar to that of Theorem \ref{thm:char-R-nega}, but is a little more involved and largely different from the strategy taken in the proof of \cite[Theorem 3.1]{Okamura2020}.

\begin{proof}
Assume that (i) holds.
By  the assumption  and Lemma \ref{lem:hol-frac-auxiliary-posi},
\begin{equation}\label{eq:id-before-derivative}
 \int_{\mathbb R} (x+\alpha)^{\lambda} \mu(dx) = \int_{\mathbb R} (x+\alpha)^{\lambda} \nu(dx)
\end{equation}
for every $\alpha \in \mathbb{H}$.
Let $k$ be the integer part of $\textup{Re}(\lambda)$ and $\delta = \textup{Re}(\lambda)-k$.
By the assumption, $0 <  \textup{Re}(\delta) < 1$.
Recall \eqref{eq:posi-plus-deri}.
By differentiating the two expectations in \eqref{eq:id-before-derivative}
with respect to $\alpha$ $k$ times,
it  holds that
\[ \int_{\mathbb R} (x+\alpha)^{\delta} \mu(dx) = \int_{\mathbb R} (x+\alpha)^{\delta} \nu(dx) \]
for every $\alpha \in \mathbb{H}$.
By Proposition \ref{prop:integral-posi-fractional},
we see that
$$\int_0^{\infty} \frac{1-\varphi_{\mu}(t)\exp(it\alpha)}{t^{1+\delta}} dt = \int_0^{\infty} \frac{1-\varphi_{\nu}(t)\exp(it\alpha)}{t^{1+\delta}} dt, \ \alpha \in \mathbb{H}.$$
By differentiating these two integrals with respect to $\alpha$,
it holds that
$$\int_0^{\infty} \frac{\varphi_{\mu}(t)\exp(it\alpha)}{t^{\delta}} dt = \int_0^{\infty} \frac{\varphi_{\nu}(t)\exp(it\alpha)}{t^{\delta}} dt, \ \alpha \in \mathbb{H},$$  in particular on the imaginary axis in $\mathbb H$.
Hence,
$$\int_0^{\infty} \frac{\varphi_{\mu}(t)}{t^{\delta}} \exp(-tx) dt = \int_0^{\infty} \frac{\varphi_{\mu}(t)}{t^{\delta}} \exp(-tx)  dt, \ x > 0.$$ 
Now by the uniqueness of the Laplace transform,
$\varphi_{\mu}(t) = \varphi_{\nu}(t)$ for every $t > 0$, and hence, $\mu = \nu$.

Assume that (ii) holds.
By the assumption and Lemma \ref{lem:hol-frac-power-posi},
we see that
$F_{\mu}(\alpha, \lambda) = F_{\nu}(\alpha,\lambda)$ for every $\lambda \in V_M$.

By Proposition \ref{prop:integral-posi-fractional},
we see that
$$\int_0^{\infty} \frac{1-\varphi_{\mu}(t)\exp(it\alpha)}{t^{1+\delta}} dt = \int_0^{\infty} \frac{1-\varphi_{\nu}(t)\exp(it\alpha)}{t^{1+\delta}} dt$$ 
for every $\delta \in  V$ such that $0 < \textup{Re}(\delta) < \min\{1, M\}$.
By an inversion formula for the Mellin transform,
$\varphi_{\mu} (t) = \varphi_{\nu} (t)$ for every $t > 0$.
Hence, $\mu = \nu$.
\end{proof}

\begin{Rem}
(i) In condition (i) in Theorem \ref{thm:char-R-posi},
we need the assumption that $\textup{Re}(\lambda) \notin \mathbb{N}$.
If $\lambda = \textup{Re}(\lambda) = 1$, then, $E[(X + \alpha)^{\lambda}] = E[X] + \alpha$,  so even if we move the value of $\alpha$, we only know about the value of $E[X]$ and we cannot identify the distribution of $X$. \\
(ii) In condition (ii) in Theorem \ref{thm:char-R-posi},
$\alpha$ can take a real number. \\
(iii) The approach taken in the proof of \cite[Theorem 3.1]{Okamura2020} was similar to that of \cite[Theorem 1]{Lin1992}.
However, we cannot take these approaches here.
\end{Rem}

\section{Applications}\label{sec:appl}

In this section, we compute the expectations of the power means of the Cauchy distribution, the $t$-distribution whose degree of freedom is $3$, and the Poincar\'e distribution.
We first give an informal and heuristic argument.
Let $(Z_n)_n$ be i.i.d. $\mathbb{H}$-valued random variables. 
We will show that there exists a constant $\beta$ and an interval $I$ such that $E\left[\exp(i t Z_1) \right] = \exp(i \beta t)$ for every $t \in I$. 
Then, by the fractional derivative or complex analysis, $E[Z_1^p] = \beta^p$ for every $p$.
By the Taylor expansion, $E[\exp(itZ_1^p)] = \exp(i \beta^p t)$.
Finally, by the fractional derivative again, we see that $E\left[ \left( \frac{1}{n} \sum_{j=1}^{n} Z_j^p \right)^{1/p} \right] = \beta$ for every $n$ and $p$.
We need some modifications in the cases of Cauchy distributions.

\subsection{Cauchy distributions}

We deal with the Cauchy distribution with location $\mu$ and scale $\sigma$.
For $\mu \in \mathbb{R}$ and $\sigma > 0$, 
its probability density function $p(x)$ is given by
\[ p(x) = \frac{\sigma}{\pi} \frac{1}{(x-\mu)^2 + \sigma^2}, \ x \in \mathbb{R}. \]

Let $\gamma := \mu + \sigma i$.

\begin{Thm}[{\cite[Theorem 7.1]{Akaoka2022}}]\label{thm:neg-Cauchy-unbiased}
Let $\alpha \in \mathbb H$ and $-1 \le p  < 0$.
Let $n \ge 2$.
Let $X_1, \dots, X_n$ be i.i.d. random variables following the Cauchy distribution with location $\mu$ and scale $\sigma$.
Then,
$$E\left[\left( \frac{1}{n} \sum_{j=1}^{n} (X_j + \alpha)^p \right)^{1/p}\right] = \gamma + \alpha.$$ 
\end{Thm}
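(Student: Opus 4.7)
My plan is to exploit the fact that the Cauchy density coincides with the Poisson kernel on $\mathbb H$: the density $p(x) = \sigma/[\pi((x-\mu)^2+\sigma^2)]$ equals $P_\gamma(x) = \frac{1}{\pi}\textup{Im}(\gamma)/|x-\gamma|^2$. Consequently $E[G(X_1)] = G(\gamma)$ for every $G$ that is bounded and holomorphic on $\mathbb H$ with continuous boundary values on $\mathbb R$, as one sees via a standard contour integration of $G(z)[(z-\gamma)^{-1} - (z-\bar\gamma)^{-1}]$ closed in $\mathbb H$. I will exploit this identity by conditioning on $X_2,\dots,X_n$ one at a time and sequentially replacing each random $(X_j+\alpha)^p$ by the deterministic value $(\gamma+\alpha)^p$.

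Concretely, set $Z_j := (X_j+\alpha)^p \in -\mathbb H$ and, conditional on $X_2,\dots,X_n$, write $A := \sum_{j=2}^n Z_j$. The inner integral over $X_1$ takes the form $E[G_A(X_1)\mid X_2,\dots,X_n]$ with $G_A(x) := \bigl(((x+\alpha)^p + A)/n\bigr)^{1/p}$, and I would check that its extension $\tilde G_A(z) := \bigl(((z+\alpha)^p + A)/n\bigr)^{1/p}$ is a bounded holomorphic function on $\mathbb H$ continuous on $\overline{\mathbb H}$: for $z \in \overline{\mathbb H}$ one has $\textup{Im}(z+\alpha) \ge \textup{Im}(\alpha) > 0$, so $z+\alpha$ stays strictly in $\mathbb H$ and $(z+\alpha)^p$ is holomorphic with values in $-\mathbb H$; adding $A \in -\mathbb H$ (a sum of $n-1 \ge 1$ elements in $-\mathbb H$, whose imaginary parts therefore sum to a strictly negative number) keeps $(z+\alpha)^p + A$ in $-\mathbb H$, away from the branch cut of $(\cdot)^{1/p}$; and the bound $|(z+\alpha)^p + A| \ge |\textup{Im}(A)| > 0$ combined with $1/p < 0$ yields $|\tilde G_A(z)| \le (|\textup{Im}(A)|/n)^{1/p}$. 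Poisson then delivers $\tilde G_A(\gamma) = \bigl(((\gamma+\alpha)^p + A)/n\bigr)^{1/p}$.

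Iterating this substitution over $X_2, X_3, \dots, X_n$ (at each step the remainder still contains at least one term in $-\mathbb H$, so the boundedness hypothesis is preserved) yields after $n$ steps
\[ E\Bigl[\bigl(\tfrac{1}{n}\textstyle\sum_{j=1}^{n} Z_j\bigr)^{1/p}\Bigr] = \bigl(\tfrac{n(\gamma+\alpha)^p}{n}\bigr)^{1/p} = ((\gamma+\alpha)^p)^{1/p} = \gamma+\alpha, \]
the last equality using that $\arg(\gamma+\alpha)\in(0,\pi)$ and $p\arg(\gamma+\alpha)\in(-\pi,0)$ both lie in the principal branch, so the composition $((\cdot)^p)^{1/p}$ acts as the identity. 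The main technical subtlety is the boundedness check at each iteration, which is precisely what forces the assumption $n \ge 2$. An alternative route via Proposition \ref{prop:integral-neg-fractional} and Lemma \ref{lem:negative-basic-1}(ii) (extracting $E[Z^{1/p}]$ from the identity $E[\exp(itZ)] = \exp(it(\gamma+\alpha)^p)$, which itself follows by Taylor-expanding $\exp(it(X_1+\alpha)^p)$ and using $E[(X_1+\alpha)^{pk}] = (\gamma+\alpha)^{pk}$ for all $k$) would require the moment hypothesis $E[|\textup{Im}(Z)|^{1/p}]<+\infty$, which fails for Cauchy in boundary cases such as $n=2$, $p=-1$; the Poisson iteration bypasses this obstacle entirely.
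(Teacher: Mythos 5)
Your argument is correct in its main thrust, but it is not the route this paper takes: it is essentially the proof of the original reference \cite{Akaoka2022}, which the authors explicitly describe as ``a repeated use of the Cauchy integral formula'' and which they deliberately replace here. The paper's proof instead conditions on the event $\{X_1 \le M\}$, under which $\textup{Im}(Z) \le -c_M < 0$ so that Lemma \ref{lem:negative-basic-1} (ii) applies to the conditional law $P_M$, and then passes to the limit $M \to \infty$ in the resulting integral identity by dominated convergence; the point of the exercise is to recover the known formula from the fractional-derivative machinery despite the failure of the moment hypothesis $(p-1)/(np)<1$. You correctly diagnose that obstruction in your last paragraph, but the paper circumvents it by truncation rather than abandoning the approach. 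Your Poisson-kernel iteration buys a self-contained complex-analytic proof with no fractional calculus at all; the paper's version buys a demonstration that the fractional-calculus formula survives at the boundary of its hypotheses.

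One step in your write-up needs attention: the very first application of the tower property, $E[W]=E\left[E[W\mid X_2,\dots,X_n]\right]$ with $W=\left(\frac{1}{n}\sum_{j}Z_j\right)^{1/p}$, is legitimate only once $W\in L^1$ is known, and your bound $|\tilde G_A(z)|\le\left(|\textup{Im}(A)|/n\right)^{1/p}$ does not deliver this: for $n=2$, $p=-1$ it reduces to $2|X_2+\alpha|^2/\textup{Im}(\alpha)$, which has infinite expectation under the Cauchy law. So conditional integrability is fine, but unconditional integrability --- which is part of the assertion --- requires a separate and genuinely delicate estimate exploiting the joint distribution; it is supplied by \cite{Akaoka2022} and is also silently used by the paper's own proof in the dominated-convergence step $\lim_{M}E_M[Z^{1/p}]=E[Z^{1/p}]$. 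Once $W\in L^1$ is granted, the rest of your iteration is sound: after the first substitution the remaining integrand is bounded by the deterministic constant $\left(|\textup{Im}((\gamma+\alpha)^p)|/n\right)^{1/p}$, so all later steps are unproblematic, and the final identity $((\gamma+\alpha)^p)^{1/p}=\gamma+\alpha$ is justified exactly as you say.
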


In \cite{Akaoka2022}, this assertion is shown by a repeated use of the Cauchy integral formula. 
Here, by Corollary \ref{cor:neg-gen-real}, we can naturally anticipate that the assertion holds. 
However, we cannot apply Corollary \ref{cor:neg-gen-real} directly, because $(p-1)/(np) < 1$ fails for some $n$.

\begin{proof}
Let $Z := \frac{1}{n} \sum_{j=1}^{n} (X_j + \alpha)^p$.
Let $P_M := P( \cdot | X_1 \le M)$ for $M > 0$.
We remark that $\textup{Im}((x+\alpha)^p) \le 0$ for every $x \in \mathbb{R}$.
Then, there exists a positive constant $c_M$ such that if $X_1 \le M$,
\[ \textup{Im}(Z) \le \frac{\textup{Im}((X_1 + \alpha)^p)}{n} \le - c_M < 0. \]
Therefore we can apply Lemma \ref{lem:negative-basic-1} (ii) and we obtain that
\[ \left.\frac{\partial^{1/p}}{\partial t^{1/p}} E_M \left[\exp(itZ)\right]\right|_{t=0} = i^{-1/p} E_M \left[Z^{1/p}\right], \]
where $E_M$ is the expectation with respect to $P_M$.
This means that
\begin{equation}\label{eq:EM-lim-1}
\frac{1}{\Gamma(-1/p)} \int_0^{\infty} t^{-1/p - 1} E_M [\exp(-itZ)] dt = i^{-1/p} E_M \left[Z^{1/p}\right].
\end{equation}

By the Lebesgue convergence theorem,
\begin{equation}\label{eq:EM-lim-2}
\lim_{M \to +\infty} E_M \left[Z^{1/p}\right] = E\left[Z^{1/p}\right],
\end{equation}
and
\begin{equation}\label{eq:EM-lim-3}
\lim_{M \to +\infty} E_M \left[\exp\left(i \frac{t}{n} (X_1 +\alpha)^p\right)\right] = E \left[\exp\left(i \frac{t}{n} (X_1+\alpha)^p\right)\right].
\end{equation}

By the Cauchy integral formula\footnote{As an alternative proof, we can also show the equality by using the Taylor expansion. See the proof of Theorem \ref{thm:Poincare} below. The alternative proof uses $E[(X_1 + \alpha)^{r}] = (\gamma + \alpha)^{r}$ for every $r < 0$.},
for every $t \le 0$,
\[ E\left[\exp\left(i t (X_1+\alpha)^p\right)\right] = E\left[\exp\left(i t Z\right)\right] = \exp\left(i t (\gamma + \alpha)^p\right). \]

By this, \eqref{eq:EM-lim-3} and
$$E_M \left[\exp(itZ)\right] = \exp\left(i \frac{n-1}{n} t (\gamma+\alpha)^p\right) E_M \left[\exp\left(i \frac{t}{n} (X_1+\alpha)^p\right)\right],$$  
we see that
\begin{equation}\label{eq:EM-lim-4}
\lim_{M \to \infty} E_M \left[\exp(itZ)\right] = \exp\left(i  t (\gamma+\alpha)^p\right).
\end{equation}

Since
$$\left|E_M \left[\exp\left(i \frac{t}{n} (X_1+\alpha)^p\right)\right]\right| \le 1,$$ 
we see that
$$\left| u^{-1/p - 1} E_M [\exp(-itZ)] \right| \le t^{-1/p-1} \exp\left(t \sin (p\theta_0) \right),$$ 
where $\arg(\gamma+\alpha) = \theta_0 \in (0, \pi)$.

By this and \eqref{eq:EM-lim-4},
we can apply the Lebesgue convergence theorem and obtain that
\begin{equation}\label{eq:EM-lim-5}
\lim_{M \to \infty} \int_0^{\infty} t^{-1/p - 1} E_M [\exp(-itZ)] dt = \int_0^{\infty} t^{-1/p - 1} \exp\left(-i  t (\gamma+\alpha)^p\right) dt.
\end{equation}
By \eqref{eq:basic-neg},
\begin{equation}\label{eq:basic-neg-cor}
\int_0^{\infty} t^{-1/p - 1} \exp\left(-i  t (\gamma+\alpha)^p\right) dt = (\gamma + \alpha) i^{-1/p} \Gamma\left(-\frac{1}{p}\right).
\end{equation}

By \eqref{eq:EM-lim-1}, \eqref{eq:EM-lim-2}, \eqref{eq:EM-lim-5} and \eqref{eq:basic-neg-cor}, we have the assertion.
\end{proof}

\begin{Rem}
(i) Let $\varphi$ be the characteristic function of $X_1$.
Then, $\varphi(t) = \exp(i\gamma t)$ for every $t \ge 0$.
By Lemma \ref{lem:negative-basic-1} (i),
we have that $E\left[ (X_1 + \alpha)^p \right]^{1/p} = \gamma + \alpha$ for $p \in (-1,0)$ and $\alpha \in \mathbb{H}$.
This is an alternative derivation of \cite[(6.1)]{Akaoka2022}.\\
(ii) The {\it positive} power mean is more difficult since $X_1 \notin L^1$.
\cite[Theorem 2.2 (a)]{Laue1980} does not hold for an $\mathbb{H}$-valued random variable $Z  =  \frac{1}{n}\sum_{j=1}^{n} (X_j + \alpha)^p$.
Let $p > 1$.
Assume that $p \notin \mathbb{N}$.
Let $k$ be the integer part of $1/p$.
Then, $(\sum_{j=1}^{n} (X_j + \alpha)^p)^k \in L^1$, and,
$$\left.\frac{\partial^{1/p}}{\partial t^{1/p}} E[\exp(-itZ)] \right|_{t=0} = \left.\frac{\partial^{1/p}}{\partial t^{1/p}} \exp(-it(\gamma+\alpha)^p) \right|_{t=0} = (-i)^{1/p} (\gamma + \alpha).$$
However, $(\sum_{j=1}^{n} (X_j + \alpha)^p)^{1/p} \notin L^1$.
\end{Rem}

\subsection{$t$-distributions}

We consider the location-scale family of a slightly modified $t$-distribution whose degree of freedom is $3$.
Let $f(x) := \frac{2}{\pi} (1+x^2)^{-2}$ for $x \in \mathbb{R}$.
Let $\mu \in \mathbb{R}$ and $\sigma > 0$.
Let $\gamma := \mu + \sigma i$.
Let
\[ p(x) := \frac{1}{\sigma} f\left(\frac{x-\mu}{\sigma} \right) = \frac{2\sigma^3}{\pi} \frac{1}{|x-\gamma|^4}, \ x \in \mathbb{R}. \]
Let $X_1, X_2, \dots$ be i.i.d. random variables such that $X_1$ has a density function $p(x)$.
Let $\alpha \in \mathbb{H}$ and $p < 0$.

We can argue with this case in the same manner as in the case of Cauchy distributions, so we only give a sketch of arguments.
By the Cauchy integral formula,
it holds that
$$E\left[(X_1 + \alpha)^{p} \right] = (\gamma + \alpha)^{p-1} \left(\gamma + \alpha - i p \sigma\right)$$ 
and for $t \le 0$,
$$E\left[\exp\left(it(X_1 + \alpha)^{p}\right) \right] = \left(1+p \sigma (\gamma + \alpha)^{p-1} t\right) \exp\left(i (\gamma + \alpha)^p t\right).$$  

As before,
let $Z := \frac{1}{n} \sum_{j=1}^{n} (X_j + \alpha)^p$.
Then,
$$E\left[ \exp(itZ) \right] = \sum_{k=0}^{n} \binom{n}{k} \left(\frac{p(\gamma+\alpha)^{p-1}}{n}\right)^k t^k \exp\left(i (\gamma + \alpha)^p t\right).$$ 
By this and \eqref{eq:basic-neg},
\[ \int_{0}^{\infty} t^{-1/p-1} E\left[ \exp(itZ) \right] dt = i^{-1/p} \sum_{k=0}^{n} \binom{n}{k} \left(\frac{ip}{n(\gamma+\alpha)}\right)^k \Gamma\left(k - \frac{1}{p}\right).   \]

Thus we see that
\begin{equation}\label{eq:t3}
E\left[\left( \frac{1}{n} \sum_{j=1}^{n} (X_j + \alpha)^p \right)^{1/p}\right] = \frac{\gamma+\alpha}{\Gamma(-1/p)} \sum_{k=0}^{n} \binom{n}{k} \left( \frac{ip}{n(\gamma+\alpha)}\right)^k \Gamma\left(k - \frac{1}{p}\right). 
\end{equation}

Since $\dfrac{p^k \Gamma(k-1/p)}{\Gamma(-1/p)} = \prod_{j=0}^{k-1} (jp-1)$, 
the right hand side of \eqref{eq:t3} is a polynomial in $p$ of degree $n-1$ with complex coefficients. 
The coefficient of the highest degree is $\dfrac{(n-1)! i^n}{n^n (\gamma+\alpha)^{n-1}}$. 
Hence, $E\left[\left( \frac{1}{n} \sum_{j=1}^{n} (X_j + \alpha)^p \right)^{1/p}\right]$ is not a constant function with respect to $p$. 
The same arguments apply to the $ t$-distribution whose degree of freedom is an odd number, but the expression of the expectation would be more complicated.

\subsection{Poincar\'e distributions}

The probability density function of a Poincar\'e distribution with a parameter $\theta = (a,b,c)$ is given by
\begin{equation*}
p_{\theta}(x,y) :=  \frac{D\exp(2D)}{\pi} \exp\left(- \frac{a(x^2+y^2)+2bx+c}{y}\right) \frac{1}{y^2}, \ \ x \in \mathbb{R}, y > 0,
\end{equation*}
where  $\theta$ belongs to the parameter space
$\Theta :=\{(a,b,c)\in\mathbb{R}^3\ : a>0, c>0, \ ac-b^2>0 $ 
and we let $D := \sqrt{ac-b^2}$.

This is the upper-half plane realization of two-dimensional hyperboloid distributions and was introduced by \cite{tojo-2020}.
The family is compatible with the Poincar\'e metric on $\mathbb{H}$.
Properties of the Poincar\'e distribution have also been investigated in \cite{tojo2021harmonic,nielsen2023i}.

By direct computations, we see that
\begin{Prop}\label{prop:half-char-Poincare}
Let $Z$ be an $\mathbb{H}$-valued random variable following a Poincar\'e distribution with a parameter $(a,b,c) \in \Theta$.
Then, 
$$E[\exp(itZ)] = \exp\left(-i \frac{b}{a} t - \frac{D}{a} t\right), \ t \ge 0.$$
\end{Prop}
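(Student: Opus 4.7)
The plan is to compute the double integral
\[ E[\exp(itZ)] = \frac{D e^{2D}}{\pi} \int_0^{\infty} \int_{-\infty}^{\infty} e^{itx - ty} \exp\!\left(-\frac{a(x^2+y^2) + 2bx + c}{y}\right) \frac{dx\,dy}{y^2}, \]
valid for $t \ge 0$ since the $e^{-ty}$ factor keeps the integrand absolutely integrable (so Fubini applies). Everything will reduce to a Gaussian integral in $x$ followed by a Bessel-type integral in $y$.

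First, I would fix $y > 0$ and collect the terms involving $x$ in the exponent, namely $itx - (ax^2+2bx)/y$. Completing the square in $x$ gives
\[ -\frac{a}{y}\left(x + \frac{2b - ity}{2a}\right)^2 + \frac{(2b-ity)^2}{4ay}, \]
so the Gaussian $x$-integral evaluates to $\sqrt{\pi y/a}\,\exp\bigl((2b-ity)^2/(4ay)\bigr)$. Expanding $(2b-ity)^2 = 4b^2 - 4ibty - t^2 y^2$ and combining with the remaining exponent $-ay - c/y - ty$, the $y$-integrand's exponent simplifies (using $b^2 - ac = -D^2$) to
\[ -\frac{ibt}{a} \;-\; \frac{(2a+t)^2}{4a}\,y \;-\; \frac{D^2}{a\,y}. \]

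Next I would evaluate the remaining integral
\[ \int_0^{\infty} y^{-3/2} \exp\!\left(-\alpha y - \frac{\beta}{y}\right) dy, \qquad \alpha = \frac{(2a+t)^2}{4a},\ \ \beta = \frac{D^2}{a}, \]
via the standard identity $\int_0^\infty y^{\nu-1} e^{-\alpha y - \beta/y} dy = 2(\beta/\alpha)^{\nu/2} K_\nu(2\sqrt{\alpha\beta})$. Taking $\nu = -1/2$ is the lucky case where $K_{-1/2}(z) = \sqrt{\pi/(2z)}\,e^{-z}$, so the integral collapses to $(\sqrt{\pi a}/D)\exp(-2\sqrt{\alpha\beta})$ with $2\sqrt{\alpha\beta} = 2D + tD/a$. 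Multiplying the prefactor $D e^{2D}/\pi$ by $\sqrt{\pi/a}$ (from the $x$-integral), by $\sqrt{\pi a}/D$, by $e^{-2D - tD/a}$, and by the leftover phase $e^{-ibt/a}$, all the constants $\pi$, $\sqrt{a}$, $D$, and $e^{2D}$ cancel, yielding $\exp(-ibt/a - Dt/a)$, as claimed.

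The only real obstacle is the bookkeeping in the exponent of $y$ after completing the square: one has to verify that $b^2/(ay) - c/y = -D^2/(ay)$ and that $-ay - ty - t^2y/(4a) = -(2a+t)^2 y/(4a)$ so the $y$-integral is in the form for which $K_{-1/2}$ applies. Once this is done, everything else is direct algebra. An alternative, if one wants to avoid Bessel functions, would be to substitute $y = (D/\sqrt{a\alpha})\,u$ to reduce the $y$-integral to $\int_0^\infty u^{-3/2} e^{-\kappa(u + 1/u)} du$ with $\kappa = \sqrt{\alpha\beta}$, and then use the elementary formula $\int_0^\infty u^{-3/2} e^{-\kappa(u+1/u)} du = \sqrt{\pi/\kappa}\,e^{-2\kappa}$; the resulting simplification is the same.
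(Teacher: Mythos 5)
Your proposal is correct and follows essentially the same route as the paper's proof: a Gaussian integral in $x$ by completing the square (yielding the exponent $-i\frac{b}{a}t - \frac{(2a+t)^2}{4a}y - \frac{D^2}{ay}$ after absorbing $e^{-ty}$), followed by the closed-form evaluation of $\int_0^\infty y^{-3/2}e^{-\alpha y - \beta/y}\,dy = \sqrt{\pi/\beta}\,e^{-2\sqrt{\alpha\beta}}$, which the paper simply states and you justify via $K_{-1/2}$ or the elementary substitution. The bookkeeping in your exponent and the cancellation of the constants are both correct, so nothing further is needed.
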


\begin{proof}
It holds that 
$$E[\exp(itZ)] = \int_{\mathbb H} \exp(itx - ty) p_{\theta}(x,y) dxdy.$$  
We first calculate the integration with respect to $x$ and we obtain that
\[  \int_{\mathbb R} \exp(itx) \exp\left(- \frac{a(x^2+y^2)+2bx+c}{y}\right) dx = \sqrt{\frac{\pi y}{a}}\exp\left(-i \frac{b}{a} t - \left(a + \frac{t^2}{4a} \right)y - \frac{D^2}{ay}\right).\]
We secondly integrate the above function with respect to $y$.
We see that
$$\int_0^{\infty} \frac{1}{y^{3/2}} \exp\left( - \frac{(t+2a)^2}{4a} y - \frac{D^2}{ay} \right) dy = \frac{\sqrt{\pi a}}{D} \exp\left(-\frac{D(t+2a)}{a}\right).$$
The assertion follows from this.
\end{proof}

\begin{Thm}\label{thm:Poincare}
Let $n \ge 2$.
Let $Z_1, \dots, Z_n$ be i.i.d. $\mathbb{H}$-valued random variables following a Poincar\'e distribution with a parameter $(a,b,c) \in \Theta$.
Then, \\
(i) For every $p$ with $p \ne 0$,
\begin{equation}\label{eq:power-poincare}
E\left[Z_1^p \right] = \left(-\frac{b}{a} + \frac{D}{a}i\right)^p.
\end{equation}
(ii) $E\left[\exp\left(t |Z_1|^p \right)\right] < +\infty$ for every $t > 0$ and $p$ with $0 < |p| < 1$.\\
(iii) For every $p$ with $0 < |p| \le 1$,
\begin{equation}\label{eq:poincare-unbiased}
 E\left[\left( \frac{1}{n} \sum_{j=1}^{n} Z_j^p \right)^{1/p}\right] = -\frac{b}{a} + \frac{D}{a}i.
 \end{equation}
\end{Thm}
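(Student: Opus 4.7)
The key input is Proposition~\ref{prop:half-char-Poincare}, which gives $E[\exp(itZ_1)] = \exp(i\beta t)$ on $[0, \infty)$ with $\beta := -b/a + (D/a)i \in \mathbb{H}$. This matches exactly the shape required by Lemmas~\ref{lem:negative-basic-1} and \ref{lem:positive-basic-1}, so the heuristic outlined at the start of Section~\ref{sec:appl} can be made rigorous; part (ii) is the technical input that lets the Taylor step in (iii) go through. I would prove (i), (ii), (iii) in that order, since (iii) uses both (i) and (ii).

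For (i), when $p \in \mathbb{N}$ the identity $\exp(i\beta t) = E[\exp(itZ_1)]$ on $[0,\infty)$ is $C^{\infty}$ from the right (since the $\exp(-a(x^2+y^2)/y)$ factor of $p_\theta$ makes all positive moments of $|Z_1|$ finite), so Taylor matching at $t = 0^+$ yields $E[Z_1^p] = \beta^p$. For non-integer $p > 0$ I would apply Lemma~\ref{lem:positive-basic-1}~(i) with $\lambda = p$: the function $f(t) = E[\exp(-itZ_1)]$ equals $\exp(-it\beta)$ on $(-\infty, 0]$, Marchaud's derivative at $0$ evaluates by formula~\eqref{eq:basic-posi} to $\beta^p i^{-p}$, and since $(-i)^{-p} i^{-p} = 1$, Lemma~\ref{lem:positive-basic-1}~(i) delivers $E[Z_1^p] = \beta^p$. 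The case $p < 0$ is parallel, using Lemma~\ref{lem:negative-basic-1}~(i) and formula~\eqref{eq:basic-neg}; the integrability condition $E[y^p] < \infty$ is immediate from the $\exp(-c/y)y^{-2}$ behavior of $p_\theta$ as $y \to 0^+$.

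Part (ii) is a direct density estimate. For $0 < p < 1$, subadditivity $|Z_1|^p \le |x|^p + y^p$ and H\"older's inequality reduce the problem to $E[\exp(s|x|^p)], E[\exp(sy^p)] < \infty$ for all $s > 0$, both provided by the Gaussian-type decay in $p_\theta$. For $-1 < p < 0$, $|Z_1|^{-|p|} \le y^{-|p|}$ reduces the problem to $E[\exp(ty^{-|p|})] < \infty$, which holds because the $\exp(-c/y)$ factor dominates $\exp(ty^{-|p|})$ near $y = 0^+$ when $|p| < 1$. With (i) and (ii) in hand, for $0 < |p| < 1$ one can exchange sum and expectation to get $E[\exp(isZ_1^p)] = \sum_k (is)^k/k! \cdot \beta^{pk} = \exp(is\beta^p)$, so by independence $E[\exp(it \cdot (1/n)\sum_j Z_j^p)] = \exp(it\beta^p)$. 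For $0 < p < 1$ ($\beta^p \in \mathbb{H}$), Theorem~\ref{thm:QAM-posi-upper} combined with Lemma~\ref{lem:positive-basic-1}~(i) applied to the constant $\beta^p$ yields $E[(Z^{(p)})^{1/p}] = (\beta^p)^{1/p} = \beta$; for $-1 < p < 0$, Theorem~\ref{thm:QAM-nega-upper} and Lemma~\ref{lem:negative-basic-1}~(ii) applied to $\beta^p \in -\mathbb{H}$ give the same conclusion. The case $p = 1$ is $E[Z_1] = \beta$.

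The main obstacle is the endpoint $p = -1$, not covered by (ii) (in fact $E[\exp(t/|Z_1|)] = +\infty$ for $t$ large, so the Taylor route breaks down). Here I would invoke Theorem~\ref{thm:QAM-nega-upper} directly; its hypothesis $E[(y^{-1}|Z_1|^2)^{1/n}] < \infty$ follows from the density, and the problem reduces to computing $\int_0^\infty E[\exp(-i(u/n)Z_1^{-1})]^n\,du$. The plan is to evaluate this by dominated convergence as $p \to -1^+$: bounded convergence gives $E[\exp(-i(u/n)Z_1^p)] \to E[\exp(-i(u/n)Z_1^{-1})]$ pointwise in $u$, the non-endpoint case supplies the explicit form $\exp(-iu\beta^p)$, and its modulus $\exp(u\,\textup{Im}(\beta^p))$ decays exponentially uniformly for $p$ in a left neighborhood of $-1$ (since $\textup{Im}(\beta^{-1}) < 0$), supplying the dominator. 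The limiting integral is the elementary $\int_0^\infty \exp(-iu\beta^{-1})\,du = -i\beta$, and the prefactor from Theorem~\ref{thm:QAM-nega-upper} then yields $E[(Z^{(-1)})^{-1}] = \beta$.
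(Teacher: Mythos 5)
Your proposal is correct, and for parts (i), (ii), and the non-endpoint cases of (iii) it follows the same route as the paper: Proposition~\ref{prop:half-char-Poincare} plus Lemma~\ref{lem:negative-basic-1}(i) (resp.\ Lemma~\ref{lem:positive-basic-1}(i)) and the contour evaluations \eqref{eq:basic-neg}, \eqref{eq:basic-posi} for (i); density estimates exploiting the $\exp(-c/y)$ and $\exp(-a(x^2+y^2)/y)$ factors for (ii) (the paper organizes these through the finiteness of $I(a,b,c-\varepsilon)$ and $I(a-\varepsilon,b,c)$ rather than subadditivity plus H\"older, but the content is identical); and the Taylor-expansion identity $E[\exp(iuZ_1^p)]=\exp(iu\beta^p)$ fed into Theorems~\ref{thm:QAM-nega-upper} and \ref{thm:QAM-posi-upper} for (iii). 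The one place where you genuinely diverge is the endpoint $p=-1$. The paper runs the Taylor argument \eqref{eq:poincare-char-neg} uniformly over $-1\le p<0$, citing (i) and (ii), even though (ii) only covers $0<|p|<1$; and your observation that $E[\exp(t/|Z_1|)]=+\infty$ for large $t$ (indeed for $t>c$, as one sees along $x\approx 0$, $y\to 0^+$) is correct, so the term-by-term integration is not justified there by absolute convergence. Your patch --- establish $E[\exp(-i(u/n)Z_1^p)]^n=\exp(-iu\beta^p)$ for $p\in(-1,0)$, pass to the limit $p\to -1^+$ by bounded convergence (the integrand has modulus $\le 1$ since $\textup{Im}(Z_1^p)\le 0$ and $u\ge 0$), and then evaluate $\int_0^\infty \exp(-iu\beta^{-1})\,du=-i\beta$ inside Theorem~\ref{thm:QAM-nega-upper} --- is valid and in fact supplies a justification that the paper's own write-up leaves implicit; note that once the pointwise identity at $p=-1$ is obtained, the final integral is explicit, so the uniform dominator you describe is not even needed. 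Two small bookkeeping points: when invoking Theorem~\ref{thm:QAM-posi-upper} you should record that $Z_1\in L^1$ (it follows from Lemma~\ref{lem:exp-integrability}-type estimates, i.e.\ $E[\exp(\varepsilon|Z_1|)]<+\infty$ for small $\varepsilon$), and the Marchaud derivative of $\exp(-it\beta)$ is $(-i)^k i^{-\delta}\beta^{\lambda}=i^{-\lambda}\beta^{\lambda}$, which after dividing by $(-i)^{\lambda}$ indeed gives $\beta^{\lambda}$ as you conclude.
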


For ease of notation, we let
$$I(a,b,c) :=  \int_{\mathbb H} \exp\left(- \frac{a(x^2+y^2)+2bx+c}{y}\right) \frac{1}{y^2} dxdy, \ \theta  = (a,b,c) \in \Theta.$$  
We show these assertions by dividing the cases according to the sign of $p$.

\begin{proof}[Proof of the case that $p  < 0$] 
(i)
We will show that for every $\lambda > 0$, $E[\textup{Im}(Z_1)^{-\lambda}] < +\infty$. 
We see that 
$$E[\textup{Im}(Z_1)^{-\lambda}]  \le \frac{D\exp(2D)}{\pi}\int_{\mathbb H} \frac{1}{y^{2+\lambda}} \exp\left(-\frac{a(x^2+y^2)+2bx+c}{y}\right) dxdy < +\infty,$$ 
because for every $\varepsilon > 0$,
$$\sup_{y > 0} y^{-2-\lambda} \exp\left(-\frac{\varepsilon}{y}\right) < +\infty,$$ 
and for sufficiently small $\varepsilon > 0$,
$I(a,b,c-\varepsilon) < +\infty$.

Therefore we can apply Lemma \ref{lem:negative-basic-1} (i).
By recalling Proposition \ref{prop:half-char-Poincare} and \eqref{eq:basic-neg},
we have \eqref{eq:power-poincare}.

(ii) We obtain that for every $t > 0$,
\[ E\left[\exp(t |Z_1|^p)\right] \le E\left[\exp(t \textup{Im}(Z_1)^p)\right] \]
\[ = \frac{D\exp(2D)}{\pi}\int_{\mathbb H} \frac{\exp(t y^p)}{y^{2}} \exp\left(-\frac{a(x^2+y^2)+2bx+c}{y}\right) dxdy < +\infty, \]
because for every $\varepsilon > 0$,
$$ \sup_{y > 0} \exp\left(t y^p - \frac{\varepsilon}{y}\right) < +\infty.$$  
and for sufficiently small $\varepsilon > 0$,
$I(a,b,c-\varepsilon) < +\infty$.

(iii) For $n \in \mathbb{N}$ and $p \in [-1,1]$,
it holds\footnote{However, it does {\it not} hold that $z^{np} = (z^n)^p$. } that $z^{np} = (z^p)^n$ for every $z \in \mathbb{C}$.
By the Taylor expansion, (i) and (ii),
we see that for every $u \ge 0$, 
\begin{equation}\label{eq:poincare-char-neg}
E\left[\exp(iuZ_1^p)\right] = \sum_{k=0}^{\infty} \frac{(iu)^k E\left[Z_1^{pk}\right]}{k!} = \sum_{k=0}^{\infty} \frac{(iu)^k \left(-\frac{b}{a} + \frac{D}{a}i\right)^{pk}}{k!} = \exp\left( iu \left(-\frac{b}{a} + \frac{D}{a}i \right)^p\right).
\end{equation}

We remark that
$$\int_{\mathbb H} \frac{(x^2 + y^2)^{(1-1/p)/2}}{y^{2-1/p}} \exp\left(-\frac{a(x^2+y^2)+2bx+c}{y}\right) dxdy < +\infty,$$ 
because for every $\varepsilon > 0$
$$\sup_{x+iy \in \mathbb H} (x^2 + y^2)^{(1-1/p)/2} \exp\left(-\varepsilon \frac{x^2 + y^2}{y}\right) < +\infty,$$
and,
$$\sup_{y > 0} y^{1/p} \exp\left(-\frac{\varepsilon}{y}\right) < +\infty,$$
and for sufficiently small $\varepsilon > 0$,
$I(a-\varepsilon,b,c-\varepsilon) < +\infty$.
Hence, 
$$E\left[ \textup{Im}(Z_1)^{1/p} |Z_1|^{1-1/p}  \right] < +\infty.$$
Let $Z := \frac{1}{n} \sum_{j=1}^{n} Z_j^p$.
By Theorem \ref{thm:QAM-nega-upper},
we see that
\[ \left.\frac{\partial^{1/p}}{\partial t^{1/p}} E\left[\exp\left(i\frac{t}{n}Z_1^p\right)\right]^n\right|_{t=0} = i^{-1/p} E\left[Z^{1/p}\right]. \]

By \eqref{eq:poincare-char-neg},
$$E\left[\exp\left(i\frac{t}{n}Z_1^p\right)\right]^n =  \exp\left( it \left(-\frac{b}{a} + \frac{D}{a}i \right)^p\right).$$ 
By this and \eqref{eq:basic-neg},
we have \eqref{eq:poincare-unbiased}.
\end{proof}

\begin{proof}[Proof of the case that $p>0$]
We first show that

\begin{Lem}\label{lem:exp-integrability}
$E\left[ \exp(t |Z_1|) \right] < +\infty$ for a sufficiently small $t > 0$.
\end{Lem}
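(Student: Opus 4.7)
The plan is to dominate the integrand $\exp(t|z|)\, p_\theta(x,y)$ by a Poincar\'e-type density with slightly perturbed parameters. The key step is the AM-GM bound
\[ \sqrt{x^2+y^2} \;=\; \sqrt{\tfrac{x^2+y^2}{y}\cdot y} \;\le\; \tfrac{1}{2}\!\left(\tfrac{x^2+y^2}{y} + y\right), \]
valid for all $(x,y) \in \mathbb{H}$. This yields $\exp(t|z|) \le \exp\bigl(\tfrac{t}{2}(x^2+y^2)/y\bigr)\exp(ty/2)$. Multiplying by the density $p_\theta(x,y)$ and using $(a-t/2)(x^2+y^2)/y = (a-t/2)x^2/y + (a-t/2)y$, the exponent of the bounding integrand simplifies to
\[ -\frac{(a-t/2)x^2 + 2bx + c}{y} \;-\; (a-t)y, \]
so that, up to the constant $D\exp(2D)/\pi$, we need to integrate $y^{-2}\exp$ of this over $\mathbb{H}$.

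Next, I would integrate the Gaussian in $x$ for each fixed $y$. Provided $a - t/2 > 0$, completing the square yields $\sqrt{\pi y/(a-t/2)}\,\exp\bigl(-\tilde D^{2}/((a-t/2)y)\bigr)$, where $\tilde D^{2} := (a-t/2)c - b^{2}$. Since $\Theta$ is open and $ac-b^{2}>0$, we have $\tilde D^{2}>0$ for all sufficiently small $t > 0$; likewise $a - t > 0$ and $a - t/2 > 0$ hold for small $t$. The problem thus reduces to the finiteness of
\[ \int_{0}^{\infty} y^{-3/2} \exp\!\left(-(a-t)\,y \;-\; \frac{\tilde D^{2}}{(a-t/2)\,y}\right) dy, \]
which is immediate: near $y=0$ the factor $\exp(-\tilde D^{2}/((a-t/2)y))$ vanishes faster than any polynomial, and near $y=\infty$ the factor $\exp(-(a-t)y)$ provides exponential decay.

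The main obstacle is essentially nothing beyond finding the right splitting in the AM-GM step so that one obtains an effective Poincar\'e density with parameter $(a-t/2,b,c)$ still in $\Theta$; the factor $\exp(ty/2)$ produced by the bound is exactly what compensates the coefficient of $y$ coming from $-(a-t/2)(x^{2}+y^{2})/y$, leaving the favorable coefficient $-(a-t)$. (One can equivalently recognize the $y$-integral as $2\bigl(\tilde D^{2}/((a-t/2)(a-t))\bigr)^{-1/4} K_{1/2}\bigl(2\sqrt{\tilde D^{2}(a-t)/(a-t/2)}\bigr)$, but the qualitative convergence suffices here.)
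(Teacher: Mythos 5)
Your proof is correct and follows essentially the same route as the paper: both absorb $\exp(t|z|)$ into the factor $\exp\bigl(-a(x^2+y^2)/y\bigr)$ by slightly decreasing $a$, the paper via the cruder bound $\tfrac{t}{2}\sqrt{x^2+y^2}\le t\,(x^2+y^2)/y$ (valid since $\sqrt{x^2+y^2}\ge y/2$) and you via AM--GM. The only difference is cosmetic: the paper then simply cites finiteness of the normalizing integral $I(a-\varepsilon,b,c)$ for the perturbed parameter in $\Theta$, whereas you verify convergence by carrying out the $x$- and $y$-integrations explicitly.
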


\begin{proof}
Let $\varepsilon > 0$ such that $I(a-\varepsilon, b, c) < +\infty$.
Then, 
\[ E\left[\exp\left(\frac{\varepsilon}{2} |Z_1|\right)\right] \le \frac{D\exp(2D)}{\pi} I(a-\varepsilon, b, c) < +\infty, \]
because 
$$\sup_{x+yi \in \mathbb{H}} \exp\left(\frac{\varepsilon}{2} \sqrt{x^2 + y^2}  -\varepsilon\frac{x^2 + y^2}{y}\right) \le 1.$$
\end{proof}

(i)
If $p$ is an integer, then, we have \eqref{eq:power-poincare} by Proposition \ref{prop:half-char-Poincare}  and Lemma \ref{lem:exp-integrability}.
Assume that $p$ is not an integer.
By Lemma \ref{lem:exp-integrability},
$$E\left[\textup{Im}(Z_1)^{p}\right] \le E\left[|Z_1|^{p}\right] < \infty.$$
Therefore we can apply  Lemma \ref{lem:positive-basic-1} (i) and we obtain that
\[ \left. \frac{\partial^{p}}{\partial t^{p}} E[\exp(-itZ_1)]\right|_{t=0} = (-i)^{p} E\left[Z_1^{p}\right]. \]
By this, Proposition \ref{prop:half-char-Poincare}, and \eqref{eq:basic-neg},
we have \eqref{eq:power-poincare}. 

(ii) follows from Lemma \ref{lem:exp-integrability}.

(iii) The case of $p=1$ is easy.
Assume that $0 < p < 1$.
Let $Z := \frac{1}{n} \sum_{j=1}^{n} Z_j^p$.
By the Taylor expansion, (i) and (ii),
we see that for every $u \ge 0$,
\begin{equation}\label{eq:poincare-char-posi}
E\left[\exp(iuZ_1^p)\right] = \sum_{k=0}^{\infty} \frac{(iu)^k E\left[Z_1^{pk}\right]}{k!} =  \exp\left( iu \left(-\frac{b}{a} + \frac{D}{a}i \right)^p\right).
\end{equation}

By H\"older's inequality,
we see that
$$ E\left[\textup{Im}(Z)^{1/p}\right] \le E\left[|Z|^{1/p}\right] \le E\left[ \left(\frac{1}{n} \sum_{j=1}^{n} |Z_j|^p\right)^{1/p} \right]  \le E\left[ \frac{1}{n} \sum_{j=1}^{n} |Z_j| \right] = E\left[ |Z_1| \right] < +\infty.$$
Therefore we can apply  Theorem \ref{thm:QAM-posi-upper} and we obtain that
\[ \left.\frac{\partial^{1/p}}{\partial t^{1/p}} E \left[\exp\left(-i\frac{t}{n}Z_1^p\right)\right]^n\right|_{t=0} = (-i)^{p} E\left[Z^{1/p}\right]. \]
By \eqref{eq:poincare-char-posi},
$$E \left[\exp\left(-i\frac{t}{n}Z_1^p\right)\right]^n = \exp\left(- it \left(-\frac{b}{a} + \frac{D}{a}i \right)^p\right).$$  
By this and \eqref{eq:basic-posi}, we have \eqref{eq:poincare-unbiased}.
\end{proof}

\begin{Rem}
(i) The case that $p=0$ corresponds to the geometric mean $E\left[Z_1^{1/n} \right]^n$, and the value is $-\frac{b}{a} + \frac{D}{a}i$ by \eqref{eq:power-poincare}. \\
(ii) Assume that $Z$ and $W$ are $\mathbb H$-valued.
It can happen that $E[\exp(itZ)] = E[\exp(itW)]$ for every $t \ge 0$, but the distributions of $Z$ and $W$ are different, as in the above case.
If  $E[\exp(itZ)] = E[\exp(itW)]$ for every $t \ge 0$,
then, by Lemma \ref{lem:negative-basic-1} (i) and Lemma \ref{lem:positive-basic-1} (i),
$E[Z^p] = E[W^p]$ for every $p \in \mathbb R$. \\
(iii) According to numerical computations, \eqref{eq:poincare-unbiased} will fail if $|p| > 1$.
\end{Rem}

\appendix
\section{Comparison with fractional absolute moments}

It is natural to compare the fractional absolute moment $E[|Z^{p}|] = E[|Z|^p]$ with the absolute value of the fractional moment $|E[Z^{p}]|$ for $p \in \mathbb{R}$. 

\begin{Prop}\label{prop:frac-abs-compare}
Assume that $p$ is a real number with $|p| \le 1$.
Let $Z$ be a $\overline{\mathbb{H}}$-valued  random variable or a $-\overline{\mathbb{H}}$-valued random variable. 
Then,
\begin{equation}\label{eq:compare-frac-abs-1}
E\left[ |Z|^p \right] \le \frac{\left| E\left[ Z^p \right] \right|}{\cos (p \pi/2)}.
\end{equation}
\end{Prop}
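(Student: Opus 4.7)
The plan is a short geometric sector/rotation argument. I will sketch it for $Z \in \overline{\mathbb{H}}$; the $-\overline{\mathbb{H}}$ case is symmetric (for instance, by passing to $\bar Z$).

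First, write $Z = |Z|e^{i\theta}$ using the principal argument $\theta \in [0, \pi]$, so that by the definition of $z^{p}$ given in the introduction one has $Z^p = |Z|^p e^{ip\theta}$. Since $|p|\le 1$ and $\theta \in [0,\pi]$, the angle $p\theta$ lies in a closed interval of length $|p|\pi$ centered at $p\pi/2$. Multiplication by $e^{-ip\pi/2}$ therefore gives
$$e^{-ip\pi/2}\,Z^p = |Z|^p \exp\!\bigl(ip(\theta - \pi/2)\bigr),$$
with $|p(\theta-\pi/2)| \le |p|\pi/2 \le \pi/2$, so $e^{-ip\pi/2}Z^p$ sits in the closed right half-plane.

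The key pointwise inequality is then
$$\textup{Re}\!\bigl(e^{-ip\pi/2} Z^{p}\bigr) = |Z|^p \cos\!\bigl(p(\theta - \pi/2)\bigr) \ \geq\ |Z|^p \cos(p\pi/2),$$
using only evenness of $\cos$ and its monotonicity on $[0,\pi/2]$. Integrating against $P^{Z}$ and combining with
$$|E[Z^{p}]| = \bigl|e^{-ip\pi/2} E[Z^{p}]\bigr| \ \geq\ \textup{Re}\!\bigl(e^{-ip\pi/2} E[Z^{p}]\bigr)$$
yields $\cos(p\pi/2)\,E[|Z|^{p}] \leq |E[Z^{p}]|$, which is exactly \eqref{eq:compare-frac-abs-1}. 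For $Z \in -\overline{\mathbb{H}}$ one rotates by $e^{+ip\pi/2}$ instead, producing the mirror-image argument.

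The computation is essentially elementary; the only mild obstacle is careful bookkeeping of the branch-cut convention $\theta \in (-\pi,\pi]$ in the $-\overline{\mathbb{H}}$ case near the negative real axis, which is why conjugating to reduce to the $\overline{\mathbb{H}}$ side is convenient. Finally, note that at $|p|=1$ the factor $\cos(p\pi/2)$ vanishes, so \eqref{eq:compare-frac-abs-1} is vacuous there; the substance of the proposition is the range $|p|<1$.
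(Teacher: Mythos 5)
Your argument for the $\overline{\mathbb{H}}$ case is correct and is genuinely different from the paper's proof. The paper normalizes the sum $\sum_{i=1}^n Z_i^p=\bigl(\sum_i|Z_i|^p\bigr)\bigl(\sum_i s_i\xi_i^p\bigr)$, shows that the convex combination $\sum_i s_i\xi_i^p$ lies in the convex hull of the arc $\{e^{i\theta}:0\le\theta\le p\pi\}$ and hence has modulus at least $\cos(p\pi/2)$, and then lets $n\to\infty$ and invokes the strong law of large numbers. Your pointwise bound $\textup{Re}\bigl(e^{-ip\pi/2}Z^p\bigr)\ge |Z|^p\cos(p\pi/2)$ followed by taking expectations proves the same inequality in one line, avoids the detour through i.i.d.\ copies and the LLN (and the attendant integrability bookkeeping), and makes it transparent where equality can occur. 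This is a cleaner route to the same geometric fact.

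There is, however, a genuine problem with your reduction of the $-\overline{\mathbb{H}}$ case by conjugation, and it sits exactly on the branch cut you flag. With the paper's convention $\arg z\in(-\pi,\pi]$, the identity $(\bar Z)^p=\overline{Z^p}$ fails on the negative real axis: for $Z<0$ one has $\arg(Z^p)=p\pi$ while $\arg\bigl(\overline{Z^p}\bigr)=-p\pi$, so $E[(\bar Z)^p]\ne\overline{E[Z^p]}$ whenever $P(Z<0)>0$, and the conjugation does not transport the $\overline{\mathbb{H}}$ estimate to $-\overline{\mathbb{H}}$. A direct sector argument also breaks there, since for $-\overline{\mathbb{H}}$-valued $Z$ the arguments of $Z^p$ range over $(-p\pi,0]\cup\{p\pi\}$, an angular spread of $2p\pi$ that no single rotation confines to a half-angle $p\pi/2$. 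Indeed the statement itself fails in this regime: take $P(Z=-1)=P(Z=-i)=1/2$ and $p=2/3$; then $Z^p$ takes the values $e^{2\pi i/3}$ and $e^{-i\pi/3}$, so $E[Z^p]=0$ while $E[|Z|^p]=1$ and $\cos(p\pi/2)=1/2$. So your proof is complete for $\overline{\mathbb{H}}$-valued $Z$, and for $-\overline{\mathbb{H}}$-valued $Z$ it works (by your conjugation, or directly with the rotation $e^{ip\pi/2}$) only under the additional hypothesis $P(Z\in(-\infty,0))=0$. This is not a defect relative to the paper: the paper's proof dismisses the $-\overline{\mathbb{H}}$ case with ``shown in the same manner'' and runs into the identical branch-cut obstruction, so the restriction should be stated in either version.
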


\begin{proof}
The case that $p \in \{0, \pm1\}$ is easily seen.
Assume that $Z$ is $\overline{\mathbb{H}}$-valued and $0 < p < 1$.
The other cases are shown in the same manner.

Let $n \ge 1$ and $Z_1, \dots Z_n$ be i.i.d. copies of $Z$. 
Let $\xi_i := Z_i / |Z_i|$ if $Z_i \ne 0$ and $\xi_i := 1$ if $Z_i = 0$.
Let $s_i := |Z_i| / \sum_{i=1}^{n} |Z_i|$ if $\sum_{i=1}^{n} |Z_i| > 0$ and $s_i := 1/n$ if $\sum_{i=1}^{n} |Z_i| = 0$.
Then,
$$\sum_{i=1}^{n} Z_i^p = \left(\sum_{i=1}^{n} |Z_i|^p \right) \left( \sum_{i=1}^{n} s_i \xi_i^p  \right).$$ 
We remark that $\sum_{i=1}^{n} s_i \xi_i^p$ is in the convex hull of $\{\xi_i^p\}_{i=1}^{n}$ and the convex hull is contained in the region surrounded by the arc $\{\exp(i\theta) : 0 \le \theta \le p\pi\}$ and the line segment connecting $1$ and $(-1)^p = \exp(ip\pi)$.
Hence, $| \sum_{i=1}^{n} s_i \xi_i^p | \ge \cos(p \pi/2)$ and 
$$\left|\sum_{i=1}^{n} Z_i^p\right| \ge \left(\sum_{i=1}^{n} |Z_i|^p \right) \cos(p \pi/2).$$ 
Now the assertion follows from the law of large numbers.
\end{proof}

In the same manner, we see that
\begin{Prop}
Assume that $p$ is a real number with $|p| < 1/2$.
Let $Z$ be a complex-valued random variable.
Then,
$$E\left[ |Z|^p \right] \le \frac{\left| E\left[ Z^p \right] \right|}{\cos (p \pi)}.$$ 
\end{Prop}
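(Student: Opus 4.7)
The plan is to mimic the convex-hull argument in the proof of Proposition \ref{prop:frac-abs-compare}, replacing the half-disk geometry with a symmetric arc valid for complex-valued $Z$. First, I would reduce to the case $0 < p < 1/2$: the case $p=0$ is immediate, while replacing $Z$ by $\overline{Z}$ interchanges $Z^p$ with $\overline{Z^p}$ (for real $p$) and fixes $|Z|^p$, so the negative $p$ case follows. I would also restrict to $E[|Z|^p]<+\infty$, since otherwise the inequality is vacuous or both sides can be interpreted as infinite.

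Next, I would take i.i.d.\ copies $Z_1,\dots,Z_n$ of $Z$ and write $Z_i=|Z_i|\xi_i$ with $|\xi_i|=1$ (setting $\xi_i:=1$ when $Z_i=0$). Under the principal branch, $\arg\xi_i^p=p\arg\xi_i\in(-p\pi,p\pi]$, and the hypothesis $|p|<1/2$ forces this arc into the open right half-plane, with $\textup{Re}(\xi_i^p)=\cos(p\arg\xi_i)\ge \cos(p\pi)$. Setting $s_i:=|Z_i|^p/\sum_j |Z_j|^p$ (and $1/n$ if the denominator vanishes) yields the factorization
\[
\sum_{i=1}^{n} Z_i^p = \left(\sum_{i=1}^{n}|Z_i|^p\right)\sum_{i=1}^{n} s_i\xi_i^p,
\]
where $\sum_{i} s_i\xi_i^p$ is a convex combination of points in $\{w\in\mathbb{C}:\textup{Re}(w)\ge\cos(p\pi)\}$; hence its real part, and a fortiori its modulus, is at least $\cos(p\pi)$. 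Consequently
\[
\left|\sum_{i=1}^{n} Z_i^p\right|\ge \cos(p\pi)\sum_{i=1}^{n}|Z_i|^p.
\]

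Finally, I would divide both sides by $n$ and invoke the strong law of large numbers for $Z^p$ and $|Z|^p$ (both in $L^1$ by integrability) to pass to the limit, obtaining $|E[Z^p]|\ge \cos(p\pi)\,E[|Z|^p]$. The only genuinely new ingredient beyond Proposition \ref{prop:frac-abs-compare} is the geometric observation that, since $\xi_i$ now ranges over the entire unit circle rather than an upper semicircle, the arc $\{e^{i\theta}:|\theta|\le p\pi\}$ is contained in the right half-plane precisely when $|p|<1/2$, producing the constant $\cos(p\pi)$ in place of $\cos(p\pi/2)$. I expect this geometric step — together with verifying that the degenerate-weight convention does not disturb convexity — to be the main (and rather modest) obstacle.
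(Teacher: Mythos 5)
Your argument is correct and is essentially the paper's own proof: the paper proves this proposition ``in the same manner'' as Proposition \ref{prop:frac-abs-compare}, i.e., by the same factorization $\sum_i Z_i^p = \bigl(\sum_i |Z_i|^p\bigr)\sum_i s_i\xi_i^p$, the observation that for $|p|<1/2$ the points $\xi_i^p$ lie on the arc $\{e^{i\theta}:|\theta|\le |p|\pi\}$ in the open right half-plane so that the convex combination has modulus at least $\cos(p\pi)$, and the strong law of large numbers. (Your conjugation reduction for $p<0$ is unnecessary and has a harmless branch-cut blemish on the negative real axis; the direct argument already covers negative $p$ since $\cos$ is even.)
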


\begin{Rem}
(i) If $|p| \le 1$ and $P(Z = 1) = P(Z = -1) = 1/2$, then,
$E[|Z|^p] = 1$ and $\left| E\left[ Z^p \right] \right| = |1+(-1)^p|/2 = \cos(p \pi/2)$.
Hence the bound in \eqref{eq:compare-frac-abs-1} is best in general. \\
(ii) If $1/2 < p \le 1$ and $P(Z = 1) = P(Z = \exp(i (1-1/p)\pi)) = 1/2$,
then, $E[Z^p] = 0$ and $E[|Z|^p] = 1$.
The case that $-1 \le p < 1/2$ is the same.
\end{Rem}

We give an application of Proposition \ref{prop:frac-abs-compare}.
Let $(Z_n)_n$ be $\overline{\mathbb{H}}$-valued i.i.d. random variables such that $Z_1 \in L^p$ for some $p > 0$.
Let $\ell \in \mathbb{N}$ such that $Z_1 \in L^{1/\ell}$.
Let $Y_j := Z_j^{1/(j+\ell)} / E\left[ Z_j^{1/(j+\ell)} \right]$, $j \ge 1$, and $M_n := \prod_{j=1}^{n} Y_j$, $n \ge 1$.
Let $\mathcal{F}_{n} := \sigma(Z_1, \dots, Z_n)$.
Then, $(M_n, \mathcal{F}_n)_n$ is a martingale.
By \eqref{eq:compare-frac-abs-1},
$$\sup_{n \ge 1} E\left[ |Z_n| \right] \le \prod_{j=1}^{\infty} \frac{1}{\cos(\frac{\pi}{2} (j+\ell))}\le C \sum_{j=1}^{\infty} \frac{1}{(j+\ell)^2} < +\infty.$$
By the martingale convergence theorem (\cite[Chapter 11]{Williams1991}),
$(Z_n)_n$ converges almost surely as $n \to \infty$.
By Kronecker's lemma,
$$ \prod_{j=1}^{n} \frac{Z_j^{1/n}}{E\left[ Z_j^{1/(j+\ell)} \right]^{(j+\ell)/n}} \to 1, \ n \to \infty, \ \textup{ a.s.} $$ 
Since $Z_1 \in L^{1/\ell}$,
$\prod_{j=1}^{n} Z_j^{1/n} \to \exp(E[\log Z_1])$, $n \to \infty$, a.s.
This gives a martingale proof of the strong law of large numbers {\it for the geometric means}. \\

\noindent{\it Acknowledgements}. \ 
The authors would like to give our sincere thanks to the reviewer for his or her careful reading of the manuscript and many comments. 
The authors thank Prof. Bin Xie for the discussions giving them the inspiration for applying fractional calculus. 
They also thank Prof. Masanori Adachi for his comments on determining sets. 
K.O. and Y.O. have been supported by JSPS KAKENHI Grant Numbers JP22K13928 and JP23K03213 respectively.

\end{document}